\newcommand{\supp}{{\rm supp \;}}
\newcommand{\R}{\mathbb{R}}
\newcommand{\N}{\mathbb{N}}
\newcommand{\Z}{\mathbb{Z}}
\newcommand{\T}{\mathbb{T}}
\newcommand{\mul}{{\bf m}}
\newcommand{\norm}[1]{\left\Vert#1\right\Vert}
\newcommand{\brkt}[1]{\left(#1\right)}
\newcommand{\abs}[1]{\left|#1\right|}
\newcommand{\set}[1]{\left\{#1\right\}}
\newcommand{\dd}{\mathrm{d}}
\renewcommand{\d}{\partial}
    \newtheorem{thm}{Theorem}[section]
    \newtheorem{cor}[thm]{Corollary}
    \newtheorem{lem}[thm]{Lemma}
    \newtheorem{prop}[thm]{Proposition}
    \newtheorem{defn}[thm]{Definition}
    \newtheorem{rem}[thm]{Observation}
    \numberwithin{equation}{section}
\date{}
\title{Restriction results for multilinear multipliers on weighted settings}
\author{Salvador Rodr\'iguez-L\'opez
\footnote{2010 Mathematics Subject Classification: {42B15,42B35}
\newline
Key words and phrases: Weighted $L^p$ spaces, Multilinear Fourier multipliers
\newline
The author has been partially supported by the Grant MTM2010-14946.}\\
\small Department of Mathematics, Uppsala University\\
\small Sweden  (\texttt{salvador@math.uu.se})}
\begin{document}
\maketitle

\begin{abstract}
We obtain restriction results of K. de Leeuw's type for maximal operators defined through multilinear Fourier multipliers of either strong or weak type acting on weighted Lebesgue spaces. We give some application of our development. In particular we obtain periodic weighted results for Coifman-Meyer, H\"ormander and H\"ormander-Mihlin type multilinear multipliers.
\end{abstract}

\section{Introduction}
The study of multilinear Fourier multipliers has it origins in the  work of R. Coifman and Y. Meyer (see for instance \cite{MR0380244}) and it has been a prolific and active area of research since the innovative work of M. Lacey and C. Thiele (see e.g. \cite{MR1689336}) on the boundedness of the bilinear Hilbert transform. The literature in this subject is currently vast, so we will confine the references to those works in direct connection with the contents of this paper.

The main body of activity in multilinear theory have consisted of proving multilinear counterparts of classical linear results. Such is the case of the theory of multilinear Calder\'on-Zygmund operators (see the seminal paper \cite{MR1880324}), and of H\"ormander-Mihlin multilinear multipliers (see \cite{MR2671120,MR2948874}). More recently, a weighted theory for such operators is being developed (see \cite{MR1947875, MR2958938,MRS,MR2483720, MR2030573} and the references therein). 

Within the development of the multilinear theory, and of direct relevance to this paper, there has been quite a few studies in establishing multilinear versions of de Leeuw's type restriction results \cite{MR0174937} on Lebesgue and Lorentz spaces \cites{MR1808390, MR2471164, MR2169476, MR2037006}. More specifically D. Fan and S. Sato \cite{MR1808390}*{Theorem 3} developed a multilinear counterpart of C. Kenig and P. Tomas  \cite{MR583403} generalisation of  de Leeuw's result for maximal operators associated to a family of multipliers given by the dilations of a given one.  To be more precise, they prove, in the particular bilinear case, the  following: 

\begin{thm}\label{eq:FanSato} Let $\mul$ be for a given continuous function in $\R^{2d}$ and let $T_r$ denote the bilinear Fourier multiplier operator associated to $\mul(r\xi,r\eta)$. Suppose that $\sup_{r>0}\abs{T_r (f_1,f_2)(x)}$ is a bounded operator mapping $L^{p_1}(\R^d)\times L^{p_2}(\R^d)$ to $L^{p}(\R^d)$, with $\frac{1}{p_1}+\frac{1}{p_2}=\frac{1}{p}$, with $1\leq p_1,p_2<\infty$. Then, the same holds for the maximal operator on $L^{p_1}(\T^d)\times L^{p_2}(\T^d)$ associated to the multipliers given by $\{\mul(rk_1,rk_2)\}_{k_1,k_2\in \Z^d}$. 
\end{thm}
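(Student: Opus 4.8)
The plan is to transfer the inequality from $\R^d$ to $\T^d$ by the classical de Leeuw device of concentrating periodic data against a dilated bump, now adapted to the bilinear maximal setting. First I would reduce, by density and a Fatou argument, to the case where $f_1,f_2$ are trigonometric polynomials on $\T^d$, say $f_j=\sum_{k}a^{(j)}_k e^{2\pi i k\cdot x}$ with finitely many nonzero coefficients, and to the supremum over $r$ in a fixed compact interval $[a,b]\subset(0,\infty)$; the full statement will then follow by monotone convergence once the bound is seen to be independent of $a$ and $b$. Fix a Schwartz function $\phi$ with $\phi(0)=1$ and $\widehat\phi$ supported in a small ball, and for $\epsilon>0$ set $F_j^\epsilon(x)=\phi(\epsilon x)f_j(x)$, regarded as functions on $\R^d$. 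Then $\widehat{F_j^\epsilon}$ is a sum of bumps $\epsilon^{-d}a^{(j)}_k\widehat\phi((\cdot-k)/\epsilon)$ concentrated at the lattice points $k\in\Z^d$, with pairwise disjoint supports once $\epsilon$ is small enough.

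The core step is the approximation
$$T_r\brkt{F_1^\epsilon,F_2^\epsilon}(x)=\phi(\epsilon x)^2\,S_r(f_1,f_2)(x)+\mathcal{E}_{r,\epsilon}(x),$$
where $S_r$ denotes the periodic bilinear multiplier with symbol $\set{\mul(rk_1,rk_2)}_{k_1,k_2\in\Z^d}$ and $\mathcal{E}_{r,\epsilon}$ is an error term. To obtain it, I would expand $T_r(F_1^\epsilon,F_2^\epsilon)$ on the frequency side: the mass of $\widehat{F_1^\epsilon}(\xi)\widehat{F_2^\epsilon}(\eta)$ sits within $O(\epsilon)$ of the points $(k_1,k_2)$, and on each such cluster one replaces the continuous symbol $\mul(r\xi,r\eta)$ by its value $\mul(rk_1,rk_2)$ at the centre. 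The replacement error is governed by the modulus of continuity of $\mul$ at scale $r\epsilon$ on the bounded frequency region determined by the finitely many active $k_j$; since $r$ ranges over the compact set $[a,b]$ and $\mul$ is uniformly continuous there, this error tends to $0$ in $L^p$ \emph{uniformly} in $r\in[a,b]$ as $\epsilon\to0$. Passing to the supremum then gives $\sup_{r\in[a,b]}\abs{T_r(F_1^\epsilon,F_2^\epsilon)}=\abs{\phi(\epsilon\cdot)}^2\sup_{r\in[a,b]}\abs{S_r(f_1,f_2)}+o(1)$ in $L^p(\R^d)$.

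Finally I would feed this into the continuous hypothesis and run the de Leeuw scaling limit. The assumed boundedness gives
$$\norm{\sup_{r}\abs{T_r(F_1^\epsilon,F_2^\epsilon)}}_{L^p(\R^d)}\le C\,\norm{F_1^\epsilon}_{L^{p_1}(\R^d)}\,\norm{F_2^\epsilon}_{L^{p_2}(\R^d)}.$$
Combining this with the previous step, the equidistribution principle
$$\epsilon^{d}\int_{\R^d}\psi(\epsilon x)\,\abs{g(x)}\,\dd x\longrightarrow\brkt{\int_{\R^d}\psi}\int_{\T^d}\abs{g},$$
valid for every $1$-periodic $g$ and every $\psi\in L^1(\R^d)$ (applied to the integrands occurring in $\norm{\cdot}_{L^p}^p$ and $\norm{\cdot}_{L^{p_j}}^{p_j}$, the former with $g=\sup_{r\in[a,b]}\abs{S_r(f_1,f_2)}$), together with the homogeneity $\frac1{p}=\frac1{p_1}+\frac1{p_2}$ which makes all powers of $\epsilon$ cancel, yields in the limit $\epsilon\to0$
$$\norm{\sup_{r\in[a,b]}\abs{S_r(f_1,f_2)}}_{L^p(\T^d)}\le C\,\frac{\norm{\phi}_{L^{p_1}}\norm{\phi}_{L^{p_2}}}{\norm{\phi}_{L^{2p}}^{2}}\,\norm{f_1}_{L^{p_1}(\T^d)}\,\norm{f_2}_{L^{p_2}(\T^d)}.$$
The constant is finite and independent of $[a,b]$, so monotone convergence in $r$ removes the restriction to the compact interval; choosing $\phi$ to approximate the indicator of a large cube drives the ratio of $\phi$-norms to $1$ and even recovers the constant $C$.

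I expect the main obstacle to be precisely the uniformity in $r$ of the symbol-replacement error: a single $\epsilon$ cannot control $\mul(r\xi,r\eta)-\mul(rk_1,rk_2)$ for arbitrarily large $r$, which is why the argument must be localised to compact $r$-intervals and the supremum reconstructed afterwards by monotonicity. The secondary technical point is to see that $\mathcal{E}_{r,\epsilon}$ is negligible \emph{after} taking the supremum rather than for each fixed $r$; this is where the uniform-in-$r$ control of $\norm{\mathcal{E}_{r,\epsilon}}_{L^p}$ over the compact range is indispensable.
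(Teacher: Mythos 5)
Your plan founders at the core approximation step: the claim that the symbol-replacement error $\mathcal{E}_{r,\epsilon}$ tends to $0$ in $L^p(\R^d)$ because $\mul$ is uniformly continuous on the relevant compact set. Uniform continuity only gives an $L^\infty$ bound. Writing the error out, it is a finite sum of modulated terms $e^{2\pi i(k_1+k_2)x}E_{k_1,k_2,r,\epsilon}(\epsilon x)$ with
\[
E_{k_1,k_2,r,\epsilon}(y)=\iint \brkt{\mul(rk_1+r\epsilon u,\,rk_2+r\epsilon v)-\mul(rk_1,rk_2)}\widehat\phi(u)\widehat\phi(v)\,e^{2\pi i (u+v)y}\,\dd u\,\dd v,
\]
and since the main terms have $L^p(\R^d)$ quasi-norm of order $\epsilon^{-d/p}$, you need $\norm{E_{k_1,k_2,r,\epsilon}}_{L^p(\R^d)}\to 0$, i.e.\ smallness of a \emph{global} $L^p$ norm over all of $\R^d$. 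But $E_{k_1,k_2,r,\epsilon}$ is just a bounded band-limited function: the modulus-of-continuity bound makes it small in $L^\infty$ (and, via Plancherel, in $L^q$ for $q\geq 2$), but provides no decay in $y$, so for $q<2$ its global $L^q$ norm need not be small — it need not even be finite. For instance, a symbol increment of the form $\sum_j c_j\brkt{e^{2\pi i (u+v)\cdot z_j}-1}$ with $\sum_j c_j$ arbitrarily small, $\sum_j c_j^{\,p}=\infty$ and $\abs{z_j}$ rapidly increasing is uniformly tiny and continuous, yet produces an $E$ which is a sum of essentially disjoint translates of $\phi^2$ with divergent $L^p$ quasi-norm when $p<1$. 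The range $p<1$ is unavoidable in this theorem ($p_1=p_2=1$ forces $p=1/2$), and even $1\leq p<2$ is not reached by your estimate. Spatially, the same problem reads: $T_r(F_1^\epsilon,F_2^\epsilon)$ has no reason to keep its $L^p$ mass in the region $\abs{x}\lesssim \epsilon^{-1}$ where $\phi(\epsilon\cdot)$ lives, because a merely continuous bounded symbol gives no kernel decay; $L^\infty$ smallness of the error helps only on a set of volume $O(\epsilon^{-d})$. So the main obstacle is not the uniformity in $r$ you flagged (compactifying the $r$-range handles that), but this loss of spatial localization. It is precisely why the classical linear de~Leeuw proof dualizes — pairing against a second concentrated bump, inside which $L^\infty$ smallness suffices — a device requiring $p\geq 1$ and linearization, unavailable for a maximal operator mapping into $L^p$ with $p<1$.

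The paper circumvents this by a genuinely different mechanism, and the comparison shows why the detour is necessary. It first proves transference for families whose multipliers are Fourier transforms of \emph{compactly supported} $L^1$ kernels (its Theorem 3.1): there the periodic operator is a finite-propagation restriction of the Euclidean one, and a Coifman--Weiss-type averaging over large cubes $Q_s$, with Kolmogorov's inequality standing in for normability, gives the bound with constant $\brkt{(r+s)/s}^{d/p}\to 1$. General continuous (``normalized'') symbols are then reached on the multiplier side: Proposition 3.5 shows that convolving the symbol with a tensor bump changes the maximal-operator norm only by a constant, and its proof replaces the unavailable Minkowski/duality step by a discretization combined with the Marcinkiewicz--Zygmund square-function inequality of Theorem 3.2 (proved by Khintchine's inequality) — the tool that survives $0<p\leq 1$ — followed by a Fatou passage using normalization to recover $\mul\vert_{\Z^{2d}}$. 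To rescue your argument you would have to mollify $\mul$ first so that the error gains decay, and proving that mollification does not destroy the maximal bound is exactly the content of that proposition; there is no shortcut around this machinery.
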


The authors developed also similar results for operators of weak type. Moreover, L. Grafakos and P. Hond\'ik \cite{MR2212316} obtained a generalisation of Fan and Sato's results for general families of multipliers. 

In the linear setting, weighted linear extensions of de Leeuw's results have been developed  by E. Berkson and T.A. Gillespie \cite{MR2001939}, K. Andersen and P. Mohanty \cite{Andersen_Mohanty}  and by M. Carro and the author \cites{MR2888205, MR2406690} for certain type of weights.  

The purpose of this paper is to extend the transference results of  Fan and Sato \cite{MR1808390} and Grafakos Hondz\'ik \cite{MR2212316} to multilinear maximal operators associated to a family of multipliers of either strong or weak type, acting on products of weighted Lebesgue spaces. In particular, we generalise Theorem \ref{eq:FanSato} to weighted settings for a certain family of weights. 

Note that many interesting cases of multilinear operators can map Banach Lebesgue spaces into $L^p$ spaces with $0 < p < 1$.  This is an obstruction whenever one tries to study certain properties of multilinear operators, as it prevents to use arguments where the Banach structure is crucial. The methods developed in this paper allow to get around the difficulties that can arise from lack of convexity in the target spaces.

The paper is organised as follows: In \S \ref{section:notation} we introduce the basic notation and state our main result (Theorem \ref{theorem:A_p_restrict_T_general} below), whose proof we develop in \S \ref{sect:Proof}. Finally, in the last section, to illustrate the applications of our main result,  we give periodic counterparts of the results of  L. Grafakos and R. Torres \cite{MR1947875} on Coifman-Meyer multipliers,  M. Fujita and N. Tomita \cite{MR2958938} regarding H\"ormander-Mihlin type multilinear multipliers and N. Michalowski, D. Rule and W. Staubach \cite{MRS} regarding multipliers in the bilinear H\"ormander class $S^m_{\rho,0}$. 

\section{Notations and main result}\label{section:notation}

 We shall denote by $\T^d$ the topological group $\R^d/\Z^d$, which can be identified with the cube $[0,1)^d$ or the cube $[-\frac{1}{2},\frac{1}{2})^d$ eventually.  Functions on
$\T^d$ will be identified with functions on $\R^d$ which are
$1$-periodic in each variable. 

When $0<p<\infty$ the Lebesgue spaces $L^p(\R^d)$ and $L^p(\T^d)$ will be the usual spaces corresponding, respectively, to Lebesgue measure on $\R^d$ and to $\T^d$.

A weight on $\R^d$ is a locally integrable function with respect to the Lebesgue measure $w:\R^d \to [0,\infty)$ such that $0<w<\infty$ \, a.e. We shall write $L^p(\R^d,w)$ for the space of functions $f$ defined by the quasi-norm
\[
	\norm{f}_{L^p(\R^d,w)}=\brkt{\int \abs{f(x)}^p w(x)\dd x}^{\frac{1}{p}}.
\]
By abuse of notation, for any measurable set $E$, we will write $w(E)=\int_E w(x)\dd x$. 
We consider the weak-Lebesgue space $L^{p,\infty}(\R^d,w)$ to be the space of functions defined by the  quasi-norm 
\[
	\norm{f}_{L^{p,\infty}}=\sup_{t>0} t w\brkt{\set{x:\; |f(x)|>t}}^{1/p}.
\]
Whenever $w$ is $1$-periodic on each variable, similar definitions hold for the spaces $L^p(\T^d,w)$ and $L^{p,\infty}(\T^d,w)$.

We shall designate by $\mathcal{C}_c(\R^d)$, $\mathcal{C}^\infty (\R^d)$ and $\mathcal{C}^\infty_c(\R^d)$ the spaces of continuous functions with compact support, the space of infinitely differentiable functions, and the space of  infinitely differentiable functions with compact support respectively.  A function $g:\T^d\to \mathbb{C}$
such that for a finitely supported sequence $\{a_k\}_{k\in \Z^d}$ of
complex numbers is written as
\[
    g(x)=\sum_{k\in \Z^d} a_k e^{2\pi i kx},
\]
is called a trigonometric polynomial and we write $g\in P(\T^d)$. 

As is well known $\mathcal{C}_c^\infty(\R^d)$ is a dense subset in $L^p(\R^d,w)$ for  any weight $w$ and any $1\leq p<\infty$, and also that $P(\T^d)$ is  dense in $L^p(\T^d, w)$ for any weight $w$ $1$-periodic on each coordinate. Observe that for such weight the local integrability implies that $w\in L^1(\T^d)$.

For any function $f$,  we shall denote by $\widehat{f}$ the Fourier transform of $f$, whenever it is well defined.  By abuse of notation, we will represent also by $\widehat{g}$  the Fourier transform for any periodic function $g$. Thus, for $f\in L^1(\R^d)$ 
\[
	\widehat{f}(\xi)=\int_{\R^d} f(x)e^{2\pi i \xi x}\dd x, \quad {\text{for all}}\; \xi\in \R^d
\]
and for $g\in L^1(\T^d)$
\[
	\widehat{g}(k)=\int_{[0,1)^d} g(x)e^{2\pi i k x}\dd x,  \quad {\text{for all}}\; k\in \Z^d.
\]
From now on, $N$ stands for a natural number greater or equal to $2$. 
For a given function $\mul\in L^\infty(\R^{Nd})$, we denote by $T_\mul$ the $N$-linear operator given by
\[
	T_{\mul}(f_1,\ldots,f_N)(x)=\int_{\R^{Nd}} \mul(\xi_1,\ldots,\xi_N) \prod_{l=1}^N \widehat{f_l}(\xi_l) e^{2\pi i (\xi_1+\ldots+\xi_N)x}\, \dd \vec{\xi},\quad x\in \R^d,
\]
for $f_1,\ldots f_N\in \mathcal{C}^\infty_c(\R^d)$. We say that $T_\mul$ is a ($N$-linear) multiplier.  Similarly, for a given $\mul\in L^\infty(\Z^{Nd})$, we denote by $\mathfrak{T}_{\mul}$ the $N$-linear operator defined  by 
\[
	 \mathfrak{T}_{\mul}(g_1,\ldots, g_N)(x)=\sum_{k_1,\ldots, k_N\in \Z^{d}} \mul(k_1,\ldots, k_N) \prod_{l=1}^N \widehat{g_l}(k_l) e^{2\pi i (k_1+\ldots+k_N)x},\quad x\in \T^{d},
\]
for $g_1,\ldots, g_N\in P(\T^d)$. 

Let $\mathcal{F}$ denote a countable fixed set of indices. For a given family $\{\mul_j\}_{j\in \mathcal{F}}$ of bounded functions in $\R^{Nd}$, we consider the maximal operators associated given by
\begin{equation}\label{eq:maximal}
	M (f_1,\ldots, f_N)(x)=\sup_{j\in \mathcal{F}} \abs{T_{\mul_j}(f_1,\ldots, f_N)(x)}.
\end{equation}
Observe that if there exists a function $K\in L^1(\R^{2d})$ such that $\mul=\widehat{K}$, then $T_{\mul}$ coincides with the operator given by
\begin{equation}\label{eq:BK}
	B_{K}(f_1,\ldots, f_N)(x)=\int_{\R^{Nd}}  K(x_1,\ldots, x_N) \prod_{j=1}^N f_j(x-x_j) \, \dd x_1\ldots \dd x_N.
\end{equation}
Similarly, we define for $\{\mul_j\}_{j\in \mathcal{F}}\subset L^{\infty}(\Z^{Nd})$ the associated maximal operator as
\begin{equation}\label{eq:maximal_periodic}
	\mathfrak{M}(g_1,\ldots, g_N)(x)={ \sup_{j\in \mathcal{F}} \abs{ \mathfrak{T}_{\mul_j}(g_1,\ldots, g_N)}}.
\end{equation}
For simplicity on the notation, we will omit the dependency  on $\mathcal{F}$ and $\{\mul_j\}_{j\in \mathcal{F}}$ of the definition of $M$ and $\mathfrak{M}$. From now on, unless stated to the contrary, we will restrict our attention to indices $1\leq p_1,\ldots p_N<\infty$ and $0<p<\infty$ satisfying
\begin{equation}\label{eq:relacion}
    \frac{1}{p_1}+\ldots + \frac{1}{p_N}=\frac{1}{p}.
\end{equation}

\begin{defn}\label{def:normalized}We say that $\mul\in L^\infty(\R^{dN})$ is {\it normalized} if for any $\xi_1,\ldots,\xi_N\in\R^d$, 
\[
    \lim_n \mul*\Phi_n(\xi_1,\ldots, \xi_N)=\mul(\xi_1,\ldots, \xi_N),
\]
where $\varphi_n(x)=\varphi(x/n)$, $\varphi\in \mathcal{C}^\infty_c(\R^d)$,
    $\widehat{\varphi}\geq 0$ and $\norm{\widehat{\varphi}}_1=1$, $	\Phi_n(\xi_1,\ldots, \xi_N)=\prod_{l=1}^N \varphi_n(\xi_l)$ and $*$ denotes the usual convolution in $\R^{Nd}$.
\end{defn}

\begin{rem}
It is easy to see that any continuous and bounded function is also normalized. Observe that in particular, for any normalized function $\mul$, the point-wise evaluation 
\[
	\mul\vert_{\Z^{Nd}}=\set{\mul(k_1,\ldots,k_N)}_{k_1,\ldots,k_N\in \Z^d},
\]
makes sense as the point-wise limit of continuous functions. 
\end{rem}

The main results of this paper  concerns transference of the boundedness of maximal normalized multipliers acting on weighted Lebesgue space and can be stated as follow.

\begin{thm}\label{theorem:A_p_restrict_T_general} Let $w, w_l$ for $l=1,\ldots, N$ be $1$-periodic weights and let   $\set{\mul_j}_{j\in \mathcal{F}}$ be a family of normalized functions.  Let $M$ be the associated maximal operator defined as in \eqref{eq:maximal} and let $\mathfrak{M}$ be the maximal operator as in \eqref{eq:maximal_periodic} associated to $\set{\mul_j\vert_{\Z^{Nd}}}_{j\in \mathcal{F}}$. 
\begin{enumerate}[leftmargin=1.7em]
	\item If there exists a constant $\mathfrak{N}$ such that 
	\begin{equation}
	\label{eq:strong}
	\norm{M(f_1,\ldots, f_N)}_{L^p\brkt{\R^d,w}} \leq \mathfrak{N} \prod_{l=1}^N \norm{f_l}_{L^{p_l}\brkt{\R^d,w_l}},
	\end{equation}
	for any $f_l\in L^{p_l}(\R^d,w_l)$, $l=1,\ldots, N$, then
	\begin{equation}
	\label{eq:strong_periodic}
	\norm{\mathfrak{M}(g_1,\ldots, g_N)}_{L^{p}\brkt{\T^d,w}} \leq \mathfrak{c}_{\vec p}\mathfrak{N} \prod_{l=1}^N \norm{g_l}_{L^{p_l}\brkt{\T^d,w_l}}
	\end{equation}
	  for any $g_l\in L^{p_l}(\T^d,w_l)$, $l=1,\ldots, N$.
		\item If there exists a constant $\mathfrak{N}$ such that 
	\begin{equation}
	\label{eq:weak}
	\norm{M(f_1,\ldots, f_N)}_{L^{p,\infty}\brkt{\R^d,w}} \leq \mathfrak{N} \prod_{l=1}^N \norm{f_l}_{L^{p_l}\brkt{\R^d,w_l}},
	\end{equation}
	for any $f_l\in L^{p_l}(\R^d,w_l)$, $l=1,\ldots, N$, then
	\begin{equation}
	\label{eq:weak_periodic}
	\norm{\mathfrak{M}(g_1,\ldots, g_N)}_{L^{p,\infty}\brkt{\T^d,w}} \leq \mathfrak{c}_{\vec p}\mathfrak{N} \prod_{l=1}^N \norm{g_l}_{L^{p_l}\brkt{\T^d,w_l}},
	\end{equation}
	for any $g_l\in L^{p_l}(\T^d,w_l)$, $l=1,\ldots, N$.
\end{enumerate}
In both cases, $c_{\vec{p}}$ is a constant depending only on $\vec p=(p,p_1,\ldots,p_N)$.
\end{thm}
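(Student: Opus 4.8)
The plan is to transfer boundedness from the Euclidean maximal operator $M$ to the periodic one $\mathfrak{M}$ via the classical de Leeuw dilation trick, adapted to the weighted multilinear setting. The key device is to realize each periodic multiplier value $\mul_j(k_1,\ldots,k_N)$ as a limit of the Euclidean multiplier evaluated after a suitable dilation, and to encode periodic functions inside Euclidean ones through modulation-and-truncation. Since the target space may have $0<p<1$, I must avoid any argument relying on the triangle inequality in $L^p(w)$; instead I will work with the pointwise maximal function directly and exploit only the quasi-norm homogeneity together with the multilinearity.

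\textbf{First} I would reduce to trigonometric polynomials $g_l\in P(\T^d)$, which are dense in $L^{p_l}(\T^d,w_l)$, so it suffices to prove \eqref{eq:strong_periodic} (resp. \eqref{eq:weak_periodic}) for such $g_l$. \textbf{Next}, given a large parameter $R$, I would associate to each $g_l=\sum_k a^{(l)}_k e^{2\pi i kx}$ a Euclidean function $f_l^R$ by fixing a fixed bump $\psi\in\mathcal{C}^\infty_c(\R^d)$ and setting $f_l^R(x)=\sum_k a^{(l)}_k\, e^{2\pi i kx}\,\psi(x/R)$, so that $\widehat{f_l^R}$ concentrates near the lattice points $k$ at the correct dilation scale; the dilation $\mul_j(r\,\cdot)$ built into the normalization plays the role of matching the Euclidean multiplier symbol $\mul_j(\xi_1,\ldots,\xi_N)$, evaluated on frequencies clustering around $(k_1,\ldots,k_N)$, with its restriction $\mul_j\vert_{\Z^{Nd}}$. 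The normalization hypothesis (Definition \ref{def:normalized}) is exactly what guarantees that as $R\to\infty$ the Euclidean operator $T_{\mul_j}$ applied to $f_1^R,\ldots,f_N^R$ reproduces, on the scale of $\T^d$, the periodic operator $\mathfrak{T}_{\mul_j}(g_1,\ldots,g_N)$ up to an error that vanishes in the limit, uniformly enough to survive the supremum over $j\in\mathcal{F}$.

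\textbf{Then} I would compare norms. On the domain side, because $\psi(\cdot/R)$ is supported in a ball of radius $\sim R$ and the weight $w_l$ is $1$-periodic, a periodization argument gives $\norm{f_l^R}_{L^{p_l}(\R^d,w_l)}^{p_l}\approx C\, R^{d}\,\norm{g_l}_{L^{p_l}(\T^d,w_l)}^{p_l}$, the factor $R^d$ coming from the number of periods inside the support and the constant $C$ from $\int|\psi|^{p_l}$. On the target side the homogeneity exponents assemble, via \eqref{eq:relacion}, so that the $R^d$ factors cancel exactly between $\norm{M(f_1^R,\ldots,f_N^R)}_{L^p(w)}$ and $\prod_l\norm{f_l^R}_{L^{p_l}(w_l)}$, leaving only a fixed constant $\mathfrak{c}_{\vec p}$ absorbing the various $\int|\psi|^{p_l}$ integrals. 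Applying the hypothesis \eqref{eq:strong} (resp. \eqref{eq:weak}) to the $f_l^R$, passing to the limit $R\to\infty$, and using Fatou-type lower semicontinuity of the (weak-)$L^p$ quasi-norm against the pointwise convergence of the maximal functions, yields \eqref{eq:strong_periodic} (resp. \eqref{eq:weak_periodic}).

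\textbf{The main obstacle} I expect is the uniform-in-$j$ control of the error in the reproduction step: one must show that the discrepancy between $T_{\mul_j}(f_1^R,\ldots,f_N^R)$ localized to a period and $\mathfrak{T}_{\mul_j}(g_1,\ldots,g_N)$ tends to zero in a manner compatible with taking the supremum over the whole family $\mathcal{F}$ \emph{before} the norm is computed. The normalization is pointwise in the symbol, whereas the maximal operator demands an estimate uniform over $j$; bridging this gap for merely normalized (not continuous) symbols, while simultaneously keeping the argument free of Banach-space convexity so that it remains valid for $0<p<1$, is the delicate part. I would handle it by first establishing the convergence for a single $\mul_j$ through the convolution $\mul_j*\Phi_n$ that defines normalization, then upgrading to the supremum by monotone/Fatou arguments on the pointwise maximal function rather than on the operator norm, so that no uniformity in $j$ stronger than pointwise convergence of each $T_{\mul_j}$ is ever required.
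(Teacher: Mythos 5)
Your scheme breaks down at the limit passage $R\to\infty$, and the failure is structural rather than technical. With $f_l^R=g_l\,\psi(\cdot/R)$, both sides of the transferred inequality grow at the rate $R^{d/p}$: indeed your (correct) periodization computation gives $\prod_l\norm{f_l^R}_{L^{p_l}(\R^d,w_l)}\approx R^{d/p}\prod_l\norm{g_l}_{L^{p_l}(\T^d,w_l)}$, so to conclude \eqref{eq:strong_periodic} you need a \emph{lower} bound $\norm{M(f_1^R,\ldots,f_N^R)}_{L^{p}(\R^d,w)}^p\gtrsim R^{d}\int_{\T^d}\mathfrak{M}(g_1,\ldots,g_N)^p w-o(R^d)$, i.e.\ a counting-of-periods estimate. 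Fatou-type lower semicontinuity cannot produce it: the pointwise limit of $M(f_1^R,\ldots,f_N^R)$ is the periodic function $\mathfrak{M}(g_1,\ldots,g_N)$, whose $L^p(\R^d,w)$ quasi-norm is infinite, so Fatou on $\R^d$ only yields $\liminf_R\norm{M(\vec f^R)}_{L^p(\R^d,w)}=\infty$, which is perfectly compatible with the hypothesis \eqref{eq:strong} and gives nothing; and Fatou on a single period gives a bound by $\liminf_R\int_{[0,1)^d}M(\vec f^R)^pw$, for which the hypothesis provides no control better than the full $O(R^d)$ bound. What is really required is that the discrepancy between $T_{\mul_j}(\vec f^R)(x)$ and $\mathfrak{T}_{\mul_j\vert_{\Z^{Nd}}}(\vec g)(x)\prod_l\psi(x/R)$ be small uniformly (or at least in $L^p$-average) over the whole mass-carrying region $\abs{x}\lesssim R$ — not merely pointwise in $x$. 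Note that the obstacle you single out, uniformity in $j$, is the harmless one (reduce to finite subfamilies and use monotone convergence); the missing uniformity is in $x$.

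Moreover, that uniform control is genuinely unavailable under the normalization hypothesis alone. After the change of variables $\xi_l=k_l+\zeta_l/R$, the quantity governing the error on the region $\abs{x}\sim R$ is the \emph{modulated} mollification
\[
\int_{\R^{Nd}}\mul_j\brkt{\vec k+\vec\zeta/R}\,\prod_{l=1}^N\widehat{\varphi}(\zeta_l)\,e^{2\pi i(\zeta_1+\cdots+\zeta_N)x/R}\,\dd\vec\zeta ,
\]
with modulation parameter $x/R$ of unit size. Definition \ref{def:normalized} controls only the unmodulated averages: for instance, a symbol with a sign-type jump at a lattice point is normalized there (the symmetric averages converge by cancellation), yet these modulated averages converge to a different, $x/R$-dependent limit. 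For fixed $x$ the modulation disappears and your pointwise convergence claim is fine, but that is exactly the regime that, by the previous paragraph, is useless. Consequently your argument can prove at most the case of continuous symbols — essentially a weighted version of Theorem \ref{eq:FanSato} — but not Theorem \ref{theorem:A_p_restrict_T_general} as stated. This is precisely why the paper proceeds differently: it first proves an \emph{exact}, error-free transference (Theorem \ref{theorem:A_p_restrict_T}) for multipliers whose kernels $K_j$ are integrable with compact support, where a periodic input truncated to $Q_{r+s}$ reproduces the periodic operator exactly on $Q_s$, so the averaging/counting argument carries no error term; the real work is then to approximate a general normalized family by such multipliers while keeping the maximal constant under control, which is done by Proposition \ref{prop:good_behaviour} via the Khintchine/Marcinkiewicz--Zygmund square-function Theorem \ref{eq:Marcinkiewicz_Zygmund} (this is what circumvents the failure of Minkowski's inequality when $p\le 1$), together with the weight-mollification Lemmas \ref{lem:regularidad_peso_debil} and \ref{lem:acotacion_peso} and the approximation-of-identity lemma. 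Normalization and Fatou enter only at the final step, on $\T^d$, where the limiting function does have finite quasi-norm and Fatou is legitimate.
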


\begin{defn}\label{def:best} For a given family of normalized functions  $\set{\mul_j}_{j\in \mathcal{F}}$, we shall denote by 
$
	\mathfrak{N}(\set{\mul_j}_{j\in \mathcal{F}}), (\text{respect. } \mathfrak{N}^w(\set{\mul_j}_{j\in \mathcal{F}})) 
$
the least constant satisfying \eqref{eq:strong} (resp. \eqref{eq:weak}).
\end{defn}

\begin{rem} Observe that  the previous result can be applied also to the case of a single multiplier by taking $\mathcal{F}$ to consist in one element. Observe also that for $w=w_l=1$ the previous result recovers Fan and Sato's \cite{MR1808390}*{Theorem 3} and Grafakos and Honz{\'{\i}}k \cite{MR2212316}*{Thm. 2.2}. 
\end{rem}

\section{Proof of Theorem \ref{theorem:A_p_restrict_T_general}}\label{sect:Proof}

For the sake of simplicity, in the exposition, we shall restrict our proofs to the bilinear case ($N=2$) as it contains the main ideas of the development and the arguments  can be easily extended to any $N\geq 2$.

We start by proving a weaker version of Theorem \ref{theorem:A_p_restrict_T_general}, where we assume stronger conditions on the multipliers. To this end, we need to recall the so called Kolmogorov condition (see \cite{MR807149}*{p. 485}). Let $\mathcal{M}$ be $\R^d$ or $\T^d$. For any $q<p$, we have the inequalities
\begin{equation}\label{eq:kolmogorov}
   \norm{f}_{L^{p,\infty}(\mathcal{M},w)}\leq \sup \norm{f\chi_E}_{L^q(\mathcal{M},w)}w(E)^{1/p-1/q}\leq c_{p,q}\norm{f}_{L^{p,\infty}(\mathcal{M},w)},
\end{equation}
where the supremum is taken on the family of sets $E$  with $0<w(E)<\infty$ and $c_{p,q}^q={{p}/({p-q})}$.

\begin{thm}\label{theorem:A_p_restrict_T} Let $w, w_l$ for $l=1,2$ be $1$-periodic weights and let   $\set{\mul_j}_{j\in \mathcal{F}}$ satisfying  that,  for each $j$, there exists $K_j\in L^{1}(\R^{Nd})$ with compact support such that $\widehat K_j(\xi)=\mul_j(\xi)$ for every $\xi\in\R^{Nd}$. Let $M$ be the associated maximal operator defined as in \eqref{eq:maximal} and let $\mathfrak{M}$ be the maximal operator as in \eqref{eq:maximal_periodic} associated to $\set{\mul_j\vert_{\Z^{2d}}}_{j\in \mathcal{F}}$. 

Assume that there exists a constant $\mathfrak{N}$ such that 
 \eqref{eq:strong} (respectively \eqref{eq:weak}) holds. Then \eqref{eq:strong_periodic} (resp. \eqref{eq:weak_periodic}) holds
where $\mathfrak{c}_{\vec{p}}=1$ (resp. $\mathfrak{c}_{\vec{p}}=\inf_{q<p} c_{p,q}$).
\begin{proof} 

By Fatou's Lemma, without loss of generality we can assume that $\mathcal{F}$ is a finite family of indices $\mathcal{F}=\set{1,\ldots, J}$, where $J\in \N$.  By sake of brevity  we are going to prove only the weak case. The strong case is obtained in a similar way with minor modifications in the proof, so we omit the details. 

It is easy to see that \eqref{eq:weak} yields that for every $\theta\in [0,1)^d$ 
    \begin{equation}\label{eq:technic_7}
        \norm{\sup_{1\leq j\leq J}\abs{B_{K_j}(f_1,f_2)}}_{L^{p,\infty}(\R^d,w(\cdot+\theta))}\leq \mathfrak{N}
        \prod_{l=1,2}\norm{f_l}_{L^p(\R^d, w_l(\cdot+\theta))}.
    \end{equation}
where $B_{K_j}(f_1,f_2)(x)=\int_{\R^{2d}}  K_j(x_1,x_2)f_1(x-x_1) f_2(x-x_2)\, \dd x_1\dd x_2$ as in \eqref{eq:BK}.    Let $g_l(\theta)=\sum_k a_k^l e^{2\pi i k\theta}\in P(\T^d)$  for $l=1,2$.  Consider
    \[
        \begin{split}
       T_{K_j}(g_1,g_2)(\theta) &=\int_{\R^{2d}} K_j(x_1,x_2)  \prod_{l=1,2} g_l(\theta-x_l)\, \dd x_1\dd x_2. %
        \end{split}
    \]
Observe that $T_{K_j}$ coincides with the  bilinear multiplier operator $\mathfrak{T}_{\mul_j\vert_{\Z^{2d}}}$, where $\mul_j\vert_{\Z^{2d}}$ is the sequence given by $\{\mul_j(k_1,k_2)\}_{k_1,k_2\in \Z^{d}}$.

    Let  $r>0$ big enough such that $\supp K_j\subset Q_r\times Q_r$ for $j=1,\ldots, J$ where $Q_r=(-r,r)^d$.   Fix any $q<p$ and for any measurable   $E\subset [0,1)^d$,  define $\tilde E=\cup_{k\in \Z^d}E+k$ as its $1$-periodic extension and, fixed $\theta\in \T^d$ let $E_\theta=\set{x\in \R^d:\; x+\theta\in \tilde E}$. Denote by $ R_{x} g(\theta)=g(\theta+x)$. The translation invariance of the Lebesgue measure yields
    \begin{equation*}
    \begin{split}
     &\norm{\sup_{1\leq j\leq J} \abs{T_{K_j}(g_1,g_2)}\chi_E }_{L^q(\T^d, w)}^q= \\
     &\qquad =
        \int_{\T^d} \sup_{1\leq j\leq J} \abs{R_xT_{K_j} (g_1,g_2)(\theta)}^q w(x+\theta)\chi_{\tilde E}(x+\theta)\; \dd \theta.
        \end{split}
    \end{equation*}
    for every $x\in \R^d$.  Therefore, for every  $s>0$, integration yields
 \begin{equation}\label{eq:technic_I}
\begin{split}
        &\left\Vert{\sup_{1\leq j\leq J} \abs{T_{K_j}(g_1,g_2)}\chi_E}\right\Vert_{L^q(\T^d,w)}^q
        \\
        &\quad =\frac{1}{(2s)^d} \int_{\T^d} \int_{Q_s\cap E_\theta}\sup_{1\leq j\leq N}\abs{R_x  {T}_{K_j} (g_1,g_2)(\theta)}^q
        w(x+\theta)\, \dd x \dd \theta.
\end{split}
\end{equation}
Since $\supp K_j\subset Q_r\times Q_r$ for $j=1,\ldots,  J$,  it follows that we can write
$$
R_xT_{K_j} (g_1,g_2)(\theta)=B_{K_j}\brkt{R_{(\cdot)} g_1(\theta) \chi_{Q_{r+s}}, R_{(\cdot)} g_2(\theta) \chi_{Q_{r+s}}}(x),
$$
for any $x\in Q_s$. 
Therefore, by (\ref{eq:technic_7}) and  (\ref{eq:kolmogorov}), the term in \eqref{eq:technic_I} is bounded by
\[
    \frac{(c_{p,q} \mathfrak{N})^q }{(2s)^d} \int_{\T^d} \set{\int_{E_\theta\cap Q_s}  w(x+\theta)\, \dd x}^{1-\frac{q}{p}} \prod_ {l=1,2}\left\{\int_{Q_{r+s}} \abs{R_{x}g_l(\theta)}^{p_l} w_l(x+\theta)\; \dd x\right\}^{\frac q {p_l}}\dd \theta. %
\]
Since $1=\brkt{1-\frac{q}{p}}+\frac{1}{p_1}+\frac{1}{p_2}$, H\"older's inequality yields that the previous term is bounded by 
\[
	\frac{c_{p,q}^q \mathfrak{N}^q}{(2s)^d} \set{\int_{\T^d} \int_{Q_s\cap E_\theta} w(x+\theta)
        \dd x\,\dd \theta}^{1-\frac q p} \prod_{l=1,2}\set{\int_{\T^d} \int_{Q_{r+s}}
        \left|R_xg_l(\theta)\right|^{p_l} w_l(x+\theta) dt \dd \theta}^{\frac q {p_l}}
\]
Exchanging the order of integration, the term in the first curly bracket is equal to 
\[
	\set{\int_{\T^d} \int_{Q_s}\chi_{\tilde{E}}(x+\theta) w(x+\theta)
        \dd x\,\dd \theta}^{1-\frac q p}=w(E)^{1-\frac q p} (2s)^{d(1-\frac q p)},
\]
and we have 
\[
	\set{\int_{\T^d} \int_{Q_{r+s}}
        \left|R_xg_l(\theta)\right|^{p_l} w_l(x+\theta) dt \dd \theta}^{\frac q {p_k}}=(2(r+s))^{\frac q {p_l}} \norm{g_l}_{L^{p_l}(\T^d,w)}^q.
\]
Thus, for any $s>0$,
\[
       \left\Vert{\sup_{1\leq j\leq J} \abs{T_{K_j}(g_1,g_2)}\chi_E}\right\Vert_{L^q(\T^d,w)}\leq {c_{p,q}\mathfrak{N} \brkt{\frac{r+s}{s}}^{\frac{d} p}} w(E)^{{\frac{1}{q}-\frac {1} p}}\prod_{l=1,2} \norm{g_l}_{L^p(\T^d,w_l)}.
\]
Therefore, taking $s\to +\infty$ and using (\ref{eq:kolmogorov}) we have
\[
    \norm{\sup_{1\leq j\leq J} \abs{T_{K_j}(g_1,g_2)}}_{L^{p,\infty}(\T^d,w)}\leq c_{p,q} \mathfrak{N}  \norm{g_1}_{L^{p_1}(\T^d,w_1)} \norm{g_2}_{L^{p_2}(\T^d,w_2)},
\]
from where the result follows considering $\inf_{q<p} c_{p,q}$.
\end{proof}
\end{thm}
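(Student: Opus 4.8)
The plan is to transfer the continuous estimate to the periodic one by an averaging-over-translations argument, exploiting the compact support of each kernel $K_j$ to realize the periodic multiplier operator $\mathfrak{T}_{\mul_j\vert_{\Z^{2d}}}$ as a localized instance of the continuous operator $B_{K_j}$ from \eqref{eq:BK}. First I would reduce, via Fatou's lemma and the monotonicity of the supremum in the index set, to a finite family $\mathcal{F}=\set{1,\dots,J}$, so that all suprema are over finitely many terms and no measurability or convergence issue arises; the general case then follows by letting $J\to\infty$. I would carry out the weak-type case in detail (the strong case being identical but simpler). The crucial preliminary observation is that, for trigonometric polynomials $g_1,g_2\in P(\T^d)$, the integral operator
$$
T_{K_j}(g_1,g_2)(\theta)=\int_{\R^{2d}} K_j(x_1,x_2)\,g_1(\theta-x_1)g_2(\theta-x_2)\,\dd x_1\dd x_2
$$
coincides with $\mathfrak{T}_{\mul_j\vert_{\Z^{2d}}}(g_1,g_2)(\theta)$, since expanding each $g_l$ into its finite Fourier series and integrating against $K_j$ reproduces exactly the factors $\mul_j(k_1,k_2)=\widehat{K_j}(k_1,k_2)$ evaluated at the integer frequencies. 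This is what converts the problem about $\mathfrak{M}$ into one about the integral operators $B_{K_j}$.

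Next I would set up the translation average. Since $\supp K_j\subset Q_r\times Q_r$ for every $j$, for $x\in Q_s$ one may truncate the periodic inputs to the larger cube $Q_{r+s}$ without altering the output, obtaining $R_x T_{K_j}(g_1,g_2)(\theta)=B_{K_j}\brkt{R_{(\cdot)}g_1(\theta)\chi_{Q_{r+s}},\,R_{(\cdot)}g_2(\theta)\chi_{Q_{r+s}}}(x)$, where $R_x g(\theta)=g(\theta+x)$. The point of this identity is that the inner object is now a genuine pair of compactly supported functions on $\R^d$ to which the hypothesis \eqref{eq:weak} applies after translating the weights by $\theta$, which changes nothing by $1$-periodicity. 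To handle the non-Banach weak-type quasi-norm when $p<1$, I would pass through the Kolmogorov condition \eqref{eq:kolmogorov}: fixing $q<p$, I test against an arbitrary set $E\subset[0,1)^d$ with its periodic extension $\tilde E$ and estimate $\norm{\sup_{1\le j\le J}\abs{T_{K_j}(g_1,g_2)}\chi_E}_{L^q(\T^d,w)}$. Using translation invariance of Lebesgue measure to average this $L^q$ quantity over $\theta\in\T^d$ and then over $x\in Q_s$, the local (in $x$) continuous estimate \eqref{eq:weak}, combined with \eqref{eq:kolmogorov}, bounds each contribution.

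I would then apply H\"older's inequality in $\theta$ with the exponents dictated by $1=\brkt{1-\tfrac{q}{p}}+\tfrac{1}{p_1}+\tfrac{1}{p_2}$, which is precisely \eqref{eq:relacion} augmented by the Kolmogorov exponent; this decouples the three factors. Unwinding each of them by exchanging the order of integration and using periodicity produces $w(E)^{1-q/p}(2s)^{d(1-q/p)}$ for the weight factor and $(2(r+s))^{q/p_l}\norm{g_l}_{L^{p_l}(\T^d,w_l)}^q$ for each input, so that after dividing by $(2s)^d$ the whole bound carries the dimensional factor $\brkt{(r+s)/s}^{d/p}$. The main obstacle, and the heart of the argument, is the control of this boundary effect coming from the truncation: one must let $s\to+\infty$ so that $\brkt{(r+s)/s}^{d/p}\to1$, which is exactly what produces the clean constant and forces the averaging cube to be taken arbitrarily large relative to the fixed support radius $r$. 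Finally, converting back from the $L^q$-testing quantity to the weak-type norm through the upper bound in \eqref{eq:kolmogorov} and optimizing in $q<p$ yields \eqref{eq:weak_periodic} with $\mathfrak{c}_{\vec p}=\inf_{q<p}c_{p,q}$; in the strong-type case the same scheme runs without the Kolmogorov detour and gives $\mathfrak{c}_{\vec p}=1$.
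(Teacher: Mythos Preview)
Your proposal is correct and follows essentially the same route as the paper's proof: reduction to a finite family, identification of $\mathfrak{T}_{\mul_j\vert_{\Z^{2d}}}$ with the convolution operator $T_{K_j}$, truncation via the compact support of $K_j$ to realize $R_xT_{K_j}(g_1,g_2)(\theta)$ as $B_{K_j}$ acting on the localized inputs, the Kolmogorov $L^q$-testing with a set $E$, averaging over $x\in Q_s$ and $\theta\in\T^d$, H\"older in $\theta$, unwinding the factors by Fubini and periodicity, and finally letting $s\to\infty$ before optimizing in $q<p$. The order of ideas, the key identities, and the handling of the boundary factor $\brkt{(r+s)/s}^{d/p}$ all match the paper's argument.
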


The next step is to weaken the hypothesis assumed on the multipliers $\mul_j$. To this end we shall give some previous technical lemmas. The following result holds for general measure spaces  $(\mathcal{M},\mu)$ and $(\mathcal{M}_j,\mu_j)$ $j=1,2$. 
\begin{thm}\label{eq:Marcinkiewicz_Zygmund} Let $\{T_j\}_{j}$ be a countable family of bilinear operators which satisfies that there exists a constant $\mathfrak{N}$ such that for any $f_l\in L^{p_l}(\mathcal{M}_l,\mu_l)$ with  $l=1,2$
\begin{equation}\label{MZboundedness}
    \norm{\sup_{j}\abs{T_j ( f_1,f_2)}}_{L^{p}(\mathcal{M},\dd \mu)}\leq\mathfrak{N} \norm{f_1}_{L^{p_1}(\mathcal{M}_1,\dd \mu_1)} \norm{f_2}_{L^{p_2}(\mathcal{M}_2,\dd \mu_2)},
\end{equation}
where $p_1,p_2\geq p$. Then 
\begin{equation}\label{MZ}
    \norm{\sup_{j}\left(\sum_{k,l} |T_j (f_{1,k},f_{2,l})|^2\right)^{1/2}}_{L^{p}(\mathcal{M},\dd \mu)} \hspace{-1cm}\leq\mathfrak{c}_{p,p_1,p_2}\mathfrak{N} \prod_{l=1,2} \norm{\left(\sum_k
    |f_{l,k}|^2\right)^{1/2}}_{L^{p_l}(\mathcal{M}_l,\dd \mu_l)}\hspace{-1cm},
\end{equation}
where $\mathfrak{c}_{p,p_1,p_2}$ is a constant depending on $p,p_1,p_2$. 
\end{thm}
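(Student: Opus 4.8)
The plan is to deduce the vector-valued bound \eqref{MZ} from the scalar one \eqref{MZboundedness} by a Marcinkiewicz--Zygmund randomization argument. First I would reduce to the case where only finitely many of the $f_{1,k}$ and $f_{2,l}$ are nonzero: the left-hand side of \eqref{MZ} increases with the number of terms retained, so if the estimate holds for finite sums with a constant independent of the truncation, the general case follows by monotone convergence (Fatou). Throughout, every quantity in sight will be nonnegative, so all the interchanges of order of integration below are legitimate by Tonelli's theorem.

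Next, let $\set{g_k}_k$ and $\set{h_l}_l$ be two independent sequences of independent standard Gaussian random variables, living on probability spaces $(\Omega_1,\mathbb{P}_1)$ and $(\Omega_2,\mathbb{P}_2)$, and set $F_1(x_1,\omega_1)=\sum_k f_{1,k}(x_1)g_k(\omega_1)$ and $F_2(x_2,\omega_2)=\sum_l f_{2,l}(x_2)h_l(\omega_2)$. Bilinearity of each $T_j$ gives
\[
T_j\brkt{F_1(\cdot,\omega_1),F_2(\cdot,\omega_2)}(x)=\sum_{k,l}T_j(f_{1,k},f_{2,l})(x)\,g_k(\omega_1)h_l(\omega_2).
\]
The products $\set{g_kh_l}_{k,l}$ form an orthonormal system in $L^2(\Omega_1\times\Omega_2)$ lying in the second Gaussian chaos, for which all $L^q$-norms with $0<q<\infty$ are mutually equivalent; this yields a two-parameter Khintchine inequality, so that for each fixed $x$ and $j$,
\[
\brkt{\sum_{k,l}\abs{T_j(f_{1,k},f_{2,l})(x)}^2}^{1/2}\leq c_p\brkt{\int_{\Omega_1\times\Omega_2}\abs{T_j(F_1,F_2)(x)}^p\,\dd\mathbb{P}_1\dd\mathbb{P}_2}^{1/p}.
\]

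I would then take the supremum over $j$ and use that, for each fixed $x$, $\sup_j\brkt{\int_{\Omega}\abs{T_j}^p}^{1/p}\leq\brkt{\int_{\Omega}\brkt{\sup_j\abs{T_j}}^p}^{1/p}$, since $\abs{T_j}^p\leq\brkt{\sup_{j'}\abs{T_{j'}}}^p$ pointwise in $\omega$. Raising to the $p$-th power, integrating in $x\in\mathcal M$ and applying Tonelli gives
\[
\norm{\sup_j\brkt{\sum_{k,l}\abs{T_j(f_{1,k},f_{2,l})}^2}^{1/2}}_{L^p(\mathcal M)}^p\leq c_p^p\int_{\Omega_1\times\Omega_2}\norm{\sup_j\abs{T_j(F_1(\cdot,\omega_1),F_2(\cdot,\omega_2))}}_{L^p(\mathcal M)}^p\dd\mathbb{P}_1\dd\mathbb{P}_2.
\]
For each fixed $(\omega_1,\omega_2)$ the inner norm is controlled by the hypothesis \eqref{MZboundedness}, producing $\mathfrak N^p\norm{F_1(\cdot,\omega_1)}_{L^{p_1}}^p\norm{F_2(\cdot,\omega_2)}_{L^{p_2}}^p$, and by independence the double integral factors as a product of integrals over $\Omega_1$ and over $\Omega_2$.

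It remains to bound $\int_{\Omega_1}\norm{F_1(\cdot,\omega_1)}_{L^{p_1}}^p\dd\mathbb{P}_1$ and its analogue for $F_2$; here the assumption $p_l\geq p$ is essential. Since $\Omega_1$ carries a probability measure, $\norm{\cdot}_{L^p(\Omega_1)}\leq\norm{\cdot}_{L^{p_1}(\Omega_1)}$, and then Khintchine's inequality in the single variable $\omega_1$ (for each fixed $x_1$) together with Tonelli yield
\[
\brkt{\int_{\Omega_1}\norm{F_1(\cdot,\omega_1)}_{L^{p_1}}^{p}\dd\mathbb{P}_1}^{1/p}\leq\brkt{\int_{\Omega_1}\norm{F_1(\cdot,\omega_1)}_{L^{p_1}}^{p_1}\dd\mathbb{P}_1}^{1/p_1}\leq c_{p_1}\norm{\brkt{\sum_k\abs{f_{1,k}}^2}^{1/2}}_{L^{p_1}(\mathcal M_1)}.
\]
Collecting constants gives \eqref{MZ} with $\mathfrak c_{p,p_1,p_2}=c_p\,c_{p_1}\,c_{p_2}$. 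The main obstacle I anticipate is precisely the two-parameter Khintchine estimate for the products $g_kh_l$: unlike the one-variable statement it cannot be quoted verbatim and must be justified either via hypercontractivity and moment equivalence for the second Wiener chaos, or more elementarily by iterating the ($\ell^2$-valued) one-parameter Khintchine inequality first in $\omega_2$ and then in $\omega_1$. Everything else---the truncation, the supremum manipulation, and the exchanges of integration---is routine once one keeps track of the nonnegativity that makes Tonelli applicable.
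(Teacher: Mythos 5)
Your proof is correct and follows essentially the same route as the paper: the paper randomizes with double Rademacher sums $\sum_{k,l} r_k(s) r_l(t) T_j(f_{1,k},f_{2,l})$ on $[0,1]^2$, invokes the bilinear Khintchine inequality (Stein, Appendix D), exchanges the supremum in $j$ with the $(s,t)$-integration to apply the hypothesis \eqref{MZboundedness}, and finishes with H\"older (using $p_l\geq p$ on a probability space) plus the scalar Khintchine inequality --- exactly your steps, with Rademacher functions in place of your Gaussians. The two-parameter Khintchine bound that you flag as the main obstacle is precisely what the paper quotes from Stein's Appendix D, and your alternative justification by iterating the $\ell^2$-valued one-parameter inequality is a valid way to obtain it.
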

\begin{proof}
	Without loss of generality we can reduce us to prove the result for $j$ in a finite set of indices $\{1,\ldots, J\}$, and for $\{f_{l,k}\}$ with a finite number of elements for $l=1,2$.  Khintchine's bilinear inequality \cite{MR0290095}*{Appendix
    D}, bilinearity and \eqref{MZboundedness} yield that the left hand side term in \eqref{MZ} is bounded by 
   \begin{eqnarray*}
            & & \frac{1}{A_p^2} \norm{\brkt{\sup_{j}\iint_{[0,1]^2} \abs{\sum_{k,l}r_k(s)r_l(t)
            T_j(f_{1,k},f_{2,l})}^p \, {\dd s }\, {\dd t }}^{1/p}}_{L^p}\\
            &\leq& \frac{1}{A_{p}^2}\brkt{\iint_{[0,1]^2} \norm{\sup_j\abs{T_j\brkt{\sum_k r_k(s) f_{1,k}, \sum_l r_l(t)f_{2,l} }}}_{L^p}^p
             \, {\dd s }\, {\dd t }}^{1/p}\\
            &\leq& \frac{\mathfrak{N}}{A_{p}^2}\brkt{\int_0^1 \norm{\sum_j r_j(s)
            f_{1,k}}_{L^{p_1}}^p\, {\dd s }}^{1/p}\brkt{\int_0^1 \norm{\sum_k r_k(t)
            f_{2,k}}_{L^{p_2}}^p\, {\dd t }}^{1/p},
    \end{eqnarray*}
    for a certain universal constant $A_p$ depending only on $p$. 
    Since for $l=1,2$, $p_l\geq p$, H\"older inequality and Khintchine's inequality yield 
    \[
    	\begin{split}
        \brkt{\int_0^1 \norm{\sum_k r_k(s)
            f_{l,k}}_{L^{p_l}}^p\!\!\!\!  {\dd s }}^{\frac 1 p}&\leq \brkt{\int_0^1 \int_{\R^d} \abs{\sum_k r_k(s)
            f_{l,k}(x)}^{p_l} \dd\mu_l(x){\dd s }}^{\frac 1 {p_l}}\\
            &\leq B_{p_l} \norm{\brkt{\sum_{k}
            \abs{f_{l,k}}^2}^{1/2}}_{L^{p_l}},
	\end{split}
    \]
    for a certain constant $B_{p_l}$ depending only on $p_l$. Hence the result follows with $\mathfrak{c}_{p,p_1,p_2}=B_{p_1}B_{p_2}/{A_p}$.
\end{proof}

A direct application of the previous theorem in combination with \eqref{eq:kolmogorov} yields the following result.
\begin{cor} Let $\{T_j\}_{j}$ be a countable family of bilinear operators which satisfies that there exists a constant $\mathfrak{N}$ such that for any $f_l\in L^{p_l}$ with  $l=1,2$
\[
    \norm{\sup_{j}\abs{T_j ( f_1,f_2)}}_{L^{p,\infty}(\R^d,\dd \mu)}\leq\mathfrak{N} \norm{f_1}_{L^{p_1}(\R^d,\dd \mu_1)} \norm{f_2}_{L^{p_2}(\R^d,\dd \mu_2)},
\]
where $p_1,p_2\geq p$. Then 
\[
    \norm{\sup_{j}\left(\sum_{k,l} |T_j (f_{1,k},f_{2,l})|^2\right)^{1/2}}_{L^{p,\infty}(\R^d,\dd \mu)}\leq \mathfrak{c}_{p,p_1,p_2}\mathfrak{N} \prod_{l=1,2} \norm{\left(\sum_k
    |f_{l,k}|^2\right)^{1/2}}_{L^{p_l}(\R^d,\dd \mu_l)},
\]
where $\mathfrak{c}_{p,p_1,p_2}$ is a constant depending on $p,p_1,p_2$. 
\end{cor}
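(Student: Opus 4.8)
The plan is to derive the weak-type vector-valued bound from the strong-type one established in Theorem~\ref{eq:Marcinkiewicz_Zygmund}, using the Kolmogorov equivalence \eqref{eq:kolmogorov} to pass back and forth between the $L^{p,\infty}$ quasi-norm and localized $L^q$ norms. I would fix once and for all an exponent $q<p$. Because the hypotheses assume $p_1,p_2\geq p$, we automatically get $p_1,p_2\geq p>q$, so the condition needed to invoke Theorem~\ref{eq:Marcinkiewicz_Zygmund} with the exponent $q$ playing the role of $p$ is satisfied.

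First I would turn the weak-type hypothesis on $\set{T_j}$ into a strong, localized hypothesis. Applying the right-hand inequality of \eqref{eq:kolmogorov} to $\sup_j\abs{T_j(f_1,f_2)}$ gives, for every measurable $E$ with $0<\mu(E)<\infty$,
\[
\norm{\sup_j\abs{T_j(f_1,f_2)}\chi_E}_{L^q(\R^d,\dd\mu)} \leq c_{p,q}\,\mathfrak{N}\,\mu(E)^{\frac1q-\frac1p}\norm{f_1}_{L^{p_1}(\R^d,\dd\mu_1)}\norm{f_2}_{L^{p_2}(\R^d,\dd\mu_2)}.
\]
Thus, for each fixed $E$, the family of bilinear operators $S_j:=\chi_E\,T_j$ satisfies the strong $L^q$ bound \eqref{MZboundedness} (with $q$ in place of $p$) with constant $c_{p,q}\,\mathfrak{N}\,\mu(E)^{1/q-1/p}$. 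Applying Theorem~\ref{eq:Marcinkiewicz_Zygmund} to $\set{S_j}$ at the exponent $q$, and noting that $\sup_j\brkt{\sum_{k,l}\abs{S_j(f_{1,k},f_{2,l})}^2}^{1/2}=\chi_E\sup_j\brkt{\sum_{k,l}\abs{T_j(f_{1,k},f_{2,l})}^2}^{1/2}$, I obtain
\[
\norm{\chi_E\sup_j\brkt{\sum_{k,l}\abs{T_j(f_{1,k},f_{2,l})}^2}^{1/2}}_{L^q(\R^d,\dd\mu)} \leq \mathfrak{c}_{q,p_1,p_2}\,c_{p,q}\,\mathfrak{N}\,\mu(E)^{\frac1q-\frac1p}\prod_{l=1,2}\norm{\brkt{\sum_k\abs{f_{l,k}}^2}^{1/2}}_{L^{p_l}(\R^d,\dd\mu_l)}.
\]

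To finish, I would multiply through by $\mu(E)^{1/p-1/q}$, which removes the set-dependent factor and makes the right-hand side independent of $E$. Taking the supremum over all admissible $E$ and then applying the left-hand inequality of \eqref{eq:kolmogorov} to $F=\sup_j\brkt{\sum_{k,l}\abs{T_j(f_{1,k},f_{2,l})}^2}^{1/2}$ converts the localized $L^q$ quantity back into $\norm{F}_{L^{p,\infty}(\R^d,\dd\mu)}$, yielding the claimed estimate with $\mathfrak{c}_{p,p_1,p_2}=c_{p,q}\,\mathfrak{c}_{q,p_1,p_2}$ (one may further optimize by infimizing over $q<p$). The argument is essentially mechanical once the roles are assigned; the one point that deserves care, and which I regard as the crux, is the exact matching of the powers of $\mu(E)$: the factor $\mu(E)^{1/q-1/p}$ produced by the right-hand Kolmogorov bound must be cancelled precisely by the $\mu(E)^{1/p-1/q}$ demanded by the left-hand Kolmogorov bound, so that the supremum over $E$ stays finite and uniform. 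I would also record the trivial but necessary check that $\chi_E\,T_j$ is still bilinear in $(f_1,f_2)$, so that Theorem~\ref{eq:Marcinkiewicz_Zygmund} genuinely applies.
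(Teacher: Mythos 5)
Your proof is correct and follows exactly the route the paper intends: the paper gives no detailed argument for this corollary, stating only that it follows from ``a direct application of the previous theorem in combination with \eqref{eq:kolmogorov}'', which is precisely your scheme of localizing the weak-type hypothesis to an $L^q$ bound on each set $E$ via Kolmogorov, applying Theorem \ref{eq:Marcinkiewicz_Zygmund} at the exponent $q<p\leq p_1,p_2$ to the bilinear operators $\chi_E T_j$, and then recovering the $L^{p,\infty}$ bound from the matching powers of $\mu(E)$. The details you supply (in particular the cancellation of $\mu(E)^{1/q-1/p}$ against $\mu(E)^{1/p-1/q}$ and the verification that $p_1,p_2\geq q$) are exactly the right ones.
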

\begin{lem}\label{lem:regularidad_peso_debil}  Let $0<p,p_1,p_2<\infty$ such that $\frac{1}{p}=\frac{1}{p_1}+\frac{1}{p_2}$. Let  $T$ be any bounded operator from $L^{p_1}(\R^d, w_1)\times L^{p_2}(\R^d, w_2)$ to $L^{p}(\R^d, w)$ (resp. $L^{p,\infty}(\R^d, w)$), with norm $\mathfrak{N}$. Suppose that $T$ satisfies that
\[
	\tau_y T(f_1,f_2)=T(\tau_y f_1,\tau_y f_2),\qquad  \text{for any $y\in \R^d$}.
\] 
 Then, for any nonnegative function $\psi\in \mathcal{C}_c(\R^d)$, $T$ is bounded from $L^{p_1}(\R^d, \psi*w_1)\times L^{p_2}(\R^d, \psi*w_2)$ to $L^{p}(\R^d, \psi*w)$  (resp. $L^{p,\infty}(\R^d,\psi*w)$) with norm bounded by $\mathfrak{N}$ (resp. $\inf_{q<p} c_{p,q} \mathfrak{N}$).
\end{lem}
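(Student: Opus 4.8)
The plan is to realise the convolved weight as a superposition of translates and to exploit the translation invariance of $T$ together with that of Lebesgue measure. For any weight $v$ and $x\in\R^d$ I would write $(\psi*v)(x)=\int_{\R^d}\psi(y)\,v(x-y)\,\dd y=\int_{\R^d}\psi(y)\,(\tau_y v)(x)\,\dd y$, so that $\psi*v$ is the average, with density $\psi$, of the translates $\tau_y v$. The first step is to note that the commutation hypothesis $\tau_y T(f_1,f_2)=T(\tau_y f_1,\tau_y f_2)$ together with the invariance of Lebesgue measure transfers the assumed boundedness of $T$ from the weight $w$ to every translate $\tau_y w$ \emph{with the same constant}: changing variables $x\mapsto x+y$ gives $\int\abs{T(f_1,f_2)}^p\tau_y w=\norm{T(\tau_{-y}f_1,\tau_{-y}f_2)}_{L^p(w)}^p$, and likewise $\norm{\tau_{-y}f_l}_{L^{p_l}(w_l)}^{p_l}=\int\abs{f_l}^{p_l}\tau_y w_l$, so the hypothesised bound (resp. its weak analogue) holds verbatim with $(w,w_1,w_2)$ replaced by $(\tau_y w,\tau_y w_1,\tau_y w_2)$.

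For the strong case I would integrate the translate bound against $\psi(y)\,\dd y$. Raising it to the $p$-th power and applying Fubini, $\int\abs{T(f_1,f_2)}^p(\psi*w)=\int\psi(y)\brkt{\int\abs{T(f_1,f_2)}^p\tau_y w}\,\dd y\le\mathfrak{N}^p\int\psi(y)\prod_{l=1,2}\brkt{\int\abs{f_l}^{p_l}\tau_y w_l}^{p/p_l}\,\dd y$. Since $\tfrac{p}{p_1}+\tfrac{p}{p_2}=1$, the integrand is a product of powers whose exponents sum to $1$, so the generalised H\"older inequality against the finite measure $\psi\,\dd y$ bounds the last integral by $\prod_l\brkt{\int\psi(y)\int\abs{f_l}^{p_l}\tau_y w_l\,\dd y}^{p/p_l}$. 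A further use of Fubini identifies each inner double integral as $\int\abs{f_l}^{p_l}(\psi*w_l)$, and taking $p$-th roots yields the claim with constant $\mathfrak{N}$.

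For the weak case the obstacle is that $\norm{\cdot}_{L^{p,\infty}}$ is not an integral, so one cannot average over $y$ directly. The device is Kolmogorov's inequality \eqref{eq:kolmogorov}, which linearises the weak quasi-norm into $L^q$ quantities over sets. I would fix $q<p$ and, for any $E$ with $0<(\psi*w)(E)<\infty$, compute $\norm{T(f_1,f_2)\chi_E}_{L^q(\psi*w)}^q=\int\psi(y)\,\norm{T(f_1,f_2)\chi_E}_{L^q(\tau_y w)}^q\,\dd y$ by Fubini. The right side of \eqref{eq:kolmogorov} applied to $\tau_y w$ bounds $\norm{T(f_1,f_2)\chi_E}_{L^q(\tau_y w)}$ by $c_{p,q}\,(\tau_y w)(E)^{1/q-1/p}\,\norm{T(f_1,f_2)}_{L^{p,\infty}(\tau_y w)}$, and the weak bound for $\tau_y w$ from the first step replaces the last factor by $\mathfrak{N}\prod_l\norm{f_l}_{L^{p_l}(\tau_y w_l)}$. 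Raising to the power $q$, the exponents on $(\tau_y w)(E)$, $\int\abs{f_1}^{p_1}\tau_y w_1$ and $\int\abs{f_2}^{p_2}\tau_y w_2$ are $1-q/p$, $q/p_1$, $q/p_2$, which again sum to $1$; generalised H\"older against $\psi\,\dd y$ followed by Fubini then yields $(c_{p,q}\mathfrak{N})^q(\psi*w)(E)^{1-q/p}\prod_l\norm{f_l}_{L^{p_l}(\psi*w_l)}^q$. Taking $q$-th roots, multiplying by $(\psi*w)(E)^{1/p-1/q}$, passing to the supremum over $E$ and invoking the left side of \eqref{eq:kolmogorov} gives the weak bound with constant $c_{p,q}\mathfrak{N}$, and finally the infimum over $q<p$ produces $\inf_{q<p}c_{p,q}\,\mathfrak{N}$. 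Along the way I would check, routinely, that $\psi*v$ is again a weight (continuity and positivity follow from $\psi\in\mathcal{C}_c$ nonnegative and local integrability of $v$). The main difficulty is exactly this weak-type linearisation: once Kolmogorov is in place, both cases reduce to the same averaging-plus-H\"older mechanism, and the bookkeeping $(1-q/p)+\sum_l q/p_l=1$ is what makes H\"older applicable.
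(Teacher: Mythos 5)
Your proposal is correct and follows essentially the same route as the paper: both realise $\psi*w$ as an average of translates, use the commutation with translations plus invariance of Lebesgue measure (your ``translated weights'' formulation is just the paper's change of variables $x\mapsto x-y$ read in the other direction), and for the weak case both linearise the $L^{p,\infty}$ quasi-norm via Kolmogorov's inequality \eqref{eq:kolmogorov}, apply H\"older against the measure $\psi(y)\,\dd y$ with the exponent bookkeeping $(1-q/p)+q/p_1+q/p_2=1$, and finish with Fubini and the infimum over $q<p$. The only difference is that you write out the strong case explicitly, which the paper leaves to the reader.
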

\begin{proof} We'll prove  only the weak case as the argument can be easily adapted to cover the strong case. Let $E$ be any measurable set in $\R^d$ such that $0<\psi*w(E)<+\infty$. Then,
    for any $q<p$,
    \[
        \begin{split}
            &\norm{T(f_1,f_2) \chi_E}_{L^q(\R^d,\psi*w)}^q\\ 
            &=\int_{E} \abs{T(f_1,f_2)(x)}^q \psi*w(x)\, \dd x=\int \psi(y) \int_E \abs{T(f_1,f_2)(x)}^q w(x-y)\, \dd x\,dy\\
                &=\int \psi(y) \int_{E-y} \abs{T(\tau_{-y}f_1,\tau_{-y}f_2)(x)}^q w(x)\, \dd x\,dy,
        \end{split}
    \]
    with $\tau_{-y}f(z)=f(z+y)$.
    Thus, by the boundedness hypothesis, \eqref{eq:kolmogorov} and H\"older's inequality, the last term in the previous expression is bounded by 
    \[
        \begin{split}
	&\mathfrak{N}^q\int \psi(y) w(E-y)^{1-\frac{q}{p}}\prod_{l=1,2}\brkt{\int \abs{\tau_{-y} f(x)}^{p_l} w_l(x)\, \dd x}^{q/p_l}\,\dd y\\
                &\leq c_{p,q}^q \mathfrak{N}^q \brkt{\psi*w(E)}^{1-\frac{q}{p}} \prod_{l=1,2}\norm{f_l}_{L^{p_l}(\R^d,g*w_l)}^q.
         \end{split}
    \]
    Then, the result follows by \eqref{eq:kolmogorov} and by taking the infimum for $q<p$.
\end{proof}

\begin{rem} Although we are not going to use this property here, let us observe that the  previous lemma implies that if $(w,w_1,w_2)\in A_{\vec{p}}$ then, $(g*w,g*w_1,g*w_2)\in A_{\vec{p}}$ for any $g\in \mathcal{C}_c(\R^d)$ (see \cite{MR2483720} for the definiton and properties of these classes of weights).
\end{rem}
The next lemma is the maximal multilinear counterpart of \cite{MR2001939}*{Theorem 2.8}. We shall mention that it is an immediate consequence of Minkowskii's inequality, as long as the target space is normable, but for the general set of indices considered the convexity of the target space fails. 

\begin{prop}\label{prop:good_behaviour} Let $\varphi \in L^{1}(\R^d)$ and  $\set{\mul_j}_j\subset L^\infty(\R^d)$.  Then $\set{\varphi\otimes \varphi*\mul_j}_j$ satisfies 
\begin{eqnarray}\label{eq:good_behaviour}
    \mathfrak{N}\brkt{\set{(\varphi\otimes \varphi)*\mul_j}_j}\leq \mathfrak{c}_{\vec{p}}||\varphi||_{L^{1}(\R^d)}^2\mathfrak{N}\brkt{\set{\mul_j}_j},\\
     \label{eq:good_behaviour_weak}\mathfrak{N}^w\brkt{\set{(\varphi\otimes \varphi)*\mul_j}_j}\leq \mathfrak{c}_{\vec{p}}||\varphi||_{L^{1}(\R^d)}^2\mathfrak{N}^w\brkt{\set{\mul_j}_j}
\end{eqnarray}
where $\mathfrak{c}_{\vec{p}}$ is a constant depending only on $\vec{p}=(p,p_1,p_2)$. 
\end{prop}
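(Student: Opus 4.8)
The plan is to reduce the convolved multiplier to the original one acting on modulated inputs, and then to replace the Minkowski inequality (which is unavailable when $p<1$ and for the weak quasi-norm) by the Marcinkiewicz--Zygmund inequality of Theorem~\ref{eq:Marcinkiewicz_Zygmund} and its Corollary. Throughout write $\mathfrak{N}=\mathfrak{N}\brkt{\set{\mul_j}_j}$ and let $\mathcal{E}_\eta f(x)=e^{2\pi i\eta x}f(x)$ denote modulation, so that $\widehat{\mathcal{E}_\eta f}(\xi)=\widehat{f}(\xi+\eta)$. For $f_1,f_2\in\mathcal{C}_c^\infty(\R^d)$ the frequency integrals converge absolutely, and the change of variables $\zeta_l=\xi_l-\eta_l$ together with Fubini gives
\[
T_{(\varphi\otimes\varphi)*\mul_j}(f_1,f_2)(x)=\int_{\R^d}\!\int_{\R^d}\varphi(\eta_1)\varphi(\eta_2)\,e^{2\pi i(\eta_1+\eta_2)x}\,T_{\mul_j}\brkt{\mathcal{E}_{\eta_1}f_1,\mathcal{E}_{\eta_2}f_2}(x)\,\dd\eta_1\,\dd\eta_2.
\]
By Fatou's lemma we may assume $\mathcal{F}=\set{1,\dots,J}$ is finite, and by density of $\mathcal{C}_c^\infty(\R^d)$ in $L^{p_l}(\R^d,w_l)$ it suffices to prove the asserted bound for such $f_1,f_2$.

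I would then discretize the $(\eta_1,\eta_2)$--integral by Riemann sums over a grid of cubes $\set{Q_m}$ with centres $\eta_m$ and masses $a_m=\int_{Q_m}\varphi$, so that $\sum_m\abs{a_m}\le\norm{\varphi}_{L^1(\R^d)}$, and estimate the resulting finite sum for fixed $x$ and $j$ by the Cauchy--Schwarz inequality in the indices $(m,m')$:
\[
\abs{\sum_{m,m'}a_m a_{m'}\,e^{2\pi i(\eta_m+\eta_{m'})x}\,T_{\mul_j}\brkt{\mathcal{E}_{\eta_m}f_1,\mathcal{E}_{\eta_{m'}}f_2}(x)}\le\norm{\varphi}_{L^1(\R^d)}\brkt{\sum_{m,m'}\abs{a_m}\,\abs{a_{m'}}\,\abs{T_{\mul_j}\brkt{\mathcal{E}_{\eta_m}f_1,\mathcal{E}_{\eta_{m'}}f_2}(x)}^2}^{1/2}.
\]
The essential point is that after taking the supremum over $j$, that supremum now sits \emph{outside} the $\ell^2$--sum, which is exactly the shape of the left-hand side of \eqref{MZ}; setting $f_{1,m}=\abs{a_m}^{1/2}\mathcal{E}_{\eta_m}f_1$ and $f_{2,m'}=\abs{a_{m'}}^{1/2}\mathcal{E}_{\eta_{m'}}f_2$ and using bilinearity absorbs the coefficients into the inputs. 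Had one instead taken absolute values and the supremum inside each summand, the order $\sup_j\sum$ versus $\sum\sup_j$ would be reversed and the estimate would fail, so this rearrangement is the crux.

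Applying Theorem~\ref{eq:Marcinkiewicz_Zygmund} with $\dd\mu_l=w_l\,\dd x$ and $\dd\mu=w\,\dd x$---whose hypothesis \eqref{MZboundedness} is precisely \eqref{eq:strong}, and with $p_1,p_2\ge p$ automatic from \eqref{eq:relacion}---the pointwise bound above yields
\[
\norm{\sup_j\abs{T_{(\varphi\otimes\varphi)*\mul_j}(f_1,f_2)}}_{L^p(\R^d,w)}\le\norm{\varphi}_{L^1(\R^d)}\,\mathfrak{c}_{\vec p}\,\mathfrak{N}\prod_{l=1,2}\norm{\brkt{\sum_m\abs{f_{l,m}}^2}^{1/2}}_{L^{p_l}(\R^d,w_l)}.
\]
Since $\abs{\mathcal{E}_\eta f}=\abs{f}$ pointwise, each square function collapses, $\brkt{\sum_m\abs{f_{1,m}}^2}^{1/2}=\brkt{\sum_m\abs{a_m}}^{1/2}\abs{f_1}\le\norm{\varphi}_{L^1(\R^d)}^{1/2}\abs{f_1}$ and likewise for $f_2$, contributing the remaining two factors $\norm{\varphi}_{L^1(\R^d)}^{1/2}$ and producing \eqref{eq:good_behaviour} with constant $\mathfrak{c}_{\vec p}\norm{\varphi}_{L^1(\R^d)}^2$. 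The weak estimate \eqref{eq:good_behaviour_weak} follows verbatim, replacing Theorem~\ref{eq:Marcinkiewicz_Zygmund} by its Corollary (the weak-type analogue) and using the monotonicity of the $L^{p,\infty}$ quasi-norm when passing the pointwise Cauchy--Schwarz bound to quasi-norms.

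The main obstacle, and the reason the routine Minkowski argument (valid only when $p\ge1$) must be bypassed, is precisely the possibility $0<p<1$: the Marcinkiewicz--Zygmund inequality is what permits the averaging over the modulation parameters to be controlled in $\ell^2$ rather than in $\ell^1$ or $\ell^p$, where the Riemann sums would blow up. The remaining delicate point is the limit from the Riemann sums back to the integral representation, which I would justify for fixed $j$ and $f_1,f_2\in\mathcal{C}_c^\infty(\R^d)$ by dominated convergence (continuity of $\eta\mapsto T_{\mul_j}(\mathcal{E}_\eta f_1,\cdot\,)$), and then pass through the finite supremum using the Fatou/lower--semicontinuity property of the $L^p$-- and $L^{p,\infty}$--quasi-norms, the uniform-in-mesh bound obtained above making the passage harmless.
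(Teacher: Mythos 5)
Your proposal is correct and follows essentially the same route as the paper's own proof: the modulation identity reducing $T_{(\varphi\otimes\varphi)*\mul_j}$ to $T_{\mul_j}$ acting on modulated inputs, discretization of the $(\eta_1,\eta_2)$-integral, Cauchy--Schwarz to place $\sup_j$ outside an $\ell^2$-sum of the exact shape \eqref{MZ}, application of Theorem~\ref{eq:Marcinkiewicz_Zygmund} (resp.\ its Corollary for the weak case), and the collapse of the square functions using $\abs{\mathcal{E}_\eta f}=\abs{f}$. The only differences are organizational: the paper discretizes via an oscillation-based partition of a compact set (handling general $\varphi\in L^1$ afterwards by truncation and monotone convergence) and notes the Minkowski shortcut for $p>1$, whereas you use Riemann sums on a grid justified by dominated convergence and uniform continuity, which is an equally valid packaging of the same argument.
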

\begin{proof} For simplicity we will prove only the weak case as it contains the main ideas of the proof. We leave the details of the strong case to the reader. 
Without loss of generality, we can assume that $\{\mul_j\}_j$ is a
finite family of bilinear multipliers of cardinal say $J\in\N$. Fixed $f_1,f_2\in
\mathcal{C}_c^\infty(\R^d)$,
\[
	\begin{split}
    \int &\brkt{(\varphi\otimes \varphi)*\mul_j}(\xi,\eta) \widehat{f_1}(\xi)\widehat{f_2}(\eta)e^{2\pi i (\xi+\eta) x}\; d\xi\\ 
    &=\int \varphi(\xi) \varphi(\eta)e^{2\pi i x (y+z)}T_{\mul_j}
    \brkt{e^{-2\pi i \xi\cdot } f_1,e^{-2\pi i \eta\cdot } f_2}(x)\; \dd \xi\dd \eta.
	\end{split}
\]
Hence,
\begin{equation}\label{eq:technic}
	\begin{split}
   &\abs{T_{(\varphi\otimes \varphi)*\mul_j} (f_1,f_2)(x)}\leq S_j^{\varphi}(f_1,f_2)(x)\\
   &:=\iint \abs{\varphi(\xi)}\abs{\varphi(\eta)} \abs{T_{\mul_j}
    \brkt{e^{-2\pi i \xi\cdot } f_1,e^{-2\pi i \eta\cdot } f_2}(x)}\; \dd \xi\dd \eta.
    \end{split}
\end{equation}
Observe that if $p>1$, since $L^{p,\infty}$ is a Banach space, Minkowski integral inequality applied to the last expression would conclude the result with $\mathfrak{c}_{\vec{p}}=1$. So, we can assume without loss of generality that $0<p\leq 1$. 

Let us first assume that $\varphi\in L^1(\R^d)$ is supported on a
compact set $\mathcal{K}$. Let 
\[
    F_{j,x}(\xi,\eta)=T_{\mul_j}(e^{-2\pi i \xi \cdot} f_1, e^{-2\pi i \eta \cdot} f_2)(x),\quad \xi,\eta \in \R^d.
\]
It is easy to see that $\xi,\zeta, \eta,\gamma\in\R^d$, $x\in \R^d$
\begin{eqnarray*}
    \lefteqn{\left|F_{j,x}(\xi,\eta) - F_{j,x}(\zeta,\gamma)\right|\leq}\\
    &\leq  \norm{\mul_j}  \brkt{\norm{\widehat{f_1}-\tau_{\xi-\zeta}\widehat{f_1}}_{L^1(\R^d)}
    \norm{\widehat{f_2}}_{L^1(\R^d)}+\norm{\widehat{f_2}-\tau_{\eta-\gamma}\widehat{f_2}}_{L^1(\R^d)}
    \norm{\widehat{f_1}}_{L^1(\R^d)}},
\end{eqnarray*}
where $\tau_\xi$ stands for the translation operator.  Then the uniform continuity of translations  in $L^1(\R^d)$ and a compactness argument yield that, for each $k\in\N\setminus\{0\} $, there exists a finite family
$\set{V_l^k}_{l=1}^{I_k}$ of pairwise disjoint covering of $\mathcal{K}$ given by measurable sets such that $\mathcal{K}\subset  \biguplus_{l=1}^{I_k} V_l^k$ and, if $l=1,\ldots,I_k$ and $\xi,\zeta \in V_l^{k}$ then
    \begin{equation}\label{eq:continuity}
        \sup_{1\leq j\leq J}\sup_{x,\eta\in \R^d}\abs{F_{j,x}(\xi,\eta)-F_{j,x}(\zeta,\eta)}+\abs{F_{j,x}(\eta,\xi)-F_{j,x}(\eta,\zeta)}\leq 1/k.
    \end{equation}

For each $k\geq 1$ let $\{V_l^k\}_{l=1}^{I_k}$ be the family of pairwise disjoint sets
given above. For each $l$, select $\xi_{l}^k\in
V_{l}^k$. Then, for every $\xi\in \mathcal{K}$ and any $k\geq 1$,
there exists a unique $l\in \set{1,\ldots,I_k}$ such that $\xi\in
V_{l}^k$ and hence \eqref{eq:continuity} yields
\[
   \sup_{1\leq j\leq J} \sup_{x,\eta\in \R^d}\abs{F_{j,x}(\xi,\eta)-F_{j,x}(\xi_l^k,\eta)}\leq \frac{1}{k}.
\]
Thus, by  (\ref{eq:technic}),
\[
	S_j^{\varphi}(f_1,f_2)(x)\leq \frac{\norm{\varphi}_{L^1(\R^d)}^2}{k}+\sum_{l=1}^{I_k}\lambda_{l}^k \int \abs{\varphi(\eta)} \abs{T_{\mul_j}
    \brkt{e^{-2\pi i \xi_{l}^k \cdot} f_1, e^{-2\pi i \eta \cdot} f_2}(x)}\dd \eta, 
\]
where  $\lambda_{l}^k=\int_{V_{l}^k} \abs{\varphi(\xi)}\, \dd \xi$. Repeating the same argument we obtain that 
\[
	S_j^{\varphi}(f_1,f_2)(x)\leq \frac{2\norm{\varphi}_{L^1(\R^d)}^2}{k}+\sum_{l,m=1}^{I_k}\lambda_{l}^k\lambda_{m}^k  \abs{T_{\mul_j}
    \brkt{e^{-2\pi i \xi_{l}^k \cdot} f_1, e^{-2\pi i \xi_{m}^k  \cdot} f_2}(x)},
\]
which yields 
\[
	\sup_{1\leq j\leq J} S_j^{\varphi}(f_1,f_2)(x)\leq \liminf_k \sup_{1\leq j\leq J} \sum_{l,m=1}^{I_k}\lambda_{l}^k\lambda_{m}^k  \abs{T_{\mul_j}
    \brkt{e^{-2\pi i \xi_{l}^k \cdot} f_1, e^{-2\pi i \xi_{m}^k  \cdot} f_2}(x)}
\]
Chauchy-Schwarz inequality yields that the term in the right hand side is bounded by
\[
    \norm{\varphi}_{L^1(\R^d)}^2 \left(\sum_{l,m=1}^{I_k} \abs{T_{\mul_j}\brkt{\sqrt{\lambda_{l}^k}\, e^{-2\pi i \xi_{l}^k
    \cdot} f_1, \sqrt{\lambda_{l}^k}\, e^{-2\pi i \xi_{m}^k
    \cdot} f_2}(x)}^2 \right)^{1/2},
\]
where we have used that $\sum_{l=1}^{I_k}
\lambda_{l}^k=\int_{\biguplus_{l=1}^k V_{l}^k } \abs{\varphi(y)}\;
dy=\norm{\varphi}_{L^1(\R^d)}$. Fatou's lemma yields that %
\begin{eqnarray*}
    \lefteqn{\norm{\sup_{1\leq j\leq J}S_j^{\varphi}(f_1,f_2)}_{L^{p,\infty}(\R^d,w)}\leq \norm{\varphi}_{L^1(\R^d)}\times}\\
    &\times
    \liminf_{k} \norm{\sup_{1\leq j\leq J}  \left(\sum_{l,m=1}^{I_k} \abs{T_{\mul_j}\brkt{\sqrt{\lambda_{l}^k}\, e^{-2\pi i \xi_{l}^k
    \cdot} f_1, \sqrt{\lambda_{l}^k}\, e^{-2\pi i \xi_{m}^k
    \cdot} f_2}(x)}^2 \right)^{1/2}}_{L^{p,\infty}(\R^d,w)}.
\end{eqnarray*}
Theorem \ref{eq:Marcinkiewicz_Zygmund} yields that the last term in the right hand side is bounded by the factor $\mathfrak{c}_{p,p_1,p_2} \mathfrak{N}^w\brkt{\set{\mul_j}_j}$ which multiplies
\begin{eqnarray*}
    & &\norm{\left(\sum_{l=1}^{I_k} \abs{\sqrt{\lambda_{l}^k}\, e^{-2\pi i \xi_{l}^k
    \cdot} f_1}^2\right)^{1/2}}_{L^{p_1}(\R^d, w)}\norm{\left(\sum_{l=1}^{I_k} \abs{\sqrt{\lambda_{l}^k}\, e^{-2\pi i \xi_{l}^k
    \cdot} f_2}^2\right)^{1/2}}_{L^{p_2}(\R^d, w)}\\
    & &=\norm{\varphi}_{L^1(\R^d)} \norm{f_1}_{L^{p_1}(w_1)}\norm{f_2}_{L^{p_2}(w_2)}.
\end{eqnarray*}
Using \eqref{eq:technic} and monotonicity, this implies that
\begin{equation}\label{eq:technic_2}
\begin{split}
       &\norm{\sup_{1\leq j\leq J} \abs{T_{(\varphi\otimes \varphi)*\mul_j} (f_1,f_2)}}_{L^{p,\infty}(\R^d,w)}\\
       &\leq
   \mathfrak{c}_{p,p_1,p_2}  \mathfrak{N}^w\brkt{\set{\mul_j}_j} \norm{\varphi}_{L^1(\R^d)}^2\norm{f_1}_{L^{p_1}(\R^d,w_1)}
   \norm{f_2}_{L^{p_2}(\R^d,w_2)},
  \end{split}
\end{equation}
which implies \eqref{eq:good_behaviour_weak}.

For the general case, if we consider $\varphi_n=\varphi\chi_{B(0,n)}$, we have that  $\{\sup_{1\leq j\leq J}S_j^{\varphi_n}(f_1,f_2)\}_n$ is an increasing sequence of functions which pointwise converges to $\sup_{1\leq j\leq J}S_j^{\varphi}(f_1,f_2)$. Then, the monotone convergence, \eqref{eq:technic} and the previous argument yields the result. 
\end{proof}

We will need also the two following technical lemma which proof can be found in \cite{MR2888205}.

\begin{lem}\label{lem:acotacion_peso} Let $w$ be $1$-periodic. If $\psi\in\mathcal{C}_c(\R^d)$ is nonnegative,   $\int_{\R^d} g=1$ and $\supp \psi\subset [-1/2,1/2]^d$, then $\inf_{x\in \R^d} \psi*w(x)>0$.
\end{lem}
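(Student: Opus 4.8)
The plan is to use the $1$-periodicity of $w$ to reduce the infimum over all of $\R^d$ to a minimum over a compact fundamental domain, and then to derive the strict positivity from the fact that $w>0$ almost everywhere. (Note that the normalization in the statement should read $\int_{\R^d}\psi=1$ rather than $\int_{\R^d}g=1$; I work with $\psi$ throughout, writing $\psi*w(x)=\int_{\R^d}\psi(x-y)\,w(y)\,\dd y$.)

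First I would record two structural facts about $\psi*w$. That it is $1$-periodic follows directly: for $k\in\Z^d$, the change of variables $y\mapsto y+k$ together with the $1$-periodicity of $w$ gives $\psi*w(x+k)=\int_{\R^d}\psi(x-y)\,w(y+k)\,\dd y=\psi*w(x)$. That it is continuous follows from the hypotheses on $\psi$: since $w$ is $1$-periodic and locally integrable it lies in $L^1(\T^d)$, hence $w\in L^1_{\mathrm{loc}}(\R^d)$, while $\psi\in\mathcal{C}_c(\R^d)$ is bounded and uniformly continuous. For $x,x'$ in a fixed bounded set the integrands are supported in a common compact set $K$, so
\[
	\abs{\psi*w(x)-\psi*w(x')}\leq \sup_{y\in K}\abs{\psi(x-y)-\psi(x'-y)}\int_K w,
\]
and the right-hand side tends to $0$ as $x'\to x$ by uniform continuity of $\psi$ and finiteness of $\int_K w$.

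The key reduction is then immediate: being continuous and $1$-periodic, $\psi*w$ descends to a continuous function on the compact torus $\T^d$, so
\[
	\inf_{x\in\R^d}\psi*w(x)=\min_{x\in[0,1)^d}\psi*w(x),
\]
and this minimum is attained at some point $x_0$. It therefore suffices to establish that $\psi*w(x)>0$ for every fixed $x\in\R^d$. For this, fix $x$: since $\psi\geq 0$ with $\int_{\R^d}\psi=1>0$, the set $\set{y:\psi(x-y)>0}$ has positive Lebesgue measure, and because $w>0$ a.e.\ the nonnegative integrand $y\mapsto\psi(x-y)\,w(y)$ is strictly positive on a set of positive measure; hence $\psi*w(x)>0$. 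Combining this with the reduction yields $\inf_{x\in\R^d}\psi*w(x)=\psi*w(x_0)>0$.

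The argument is elementary, and the only step requiring genuine care is the continuity of $\psi*w$: it is precisely this that lets the infimum be realized as an attained \emph{minimum} on the compact fundamental domain, rather than merely a (possibly vanishing) infimum of strictly positive values. This is where the hypotheses $\psi\in\mathcal{C}_c(\R^d)$ and the local integrability of the periodic weight $w$ are both used; the support condition $\supp\psi\subset[-1/2,1/2]^d$ is not needed for this lemma as stated, though it will matter where the lemma is applied.
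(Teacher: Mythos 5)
Your proof is correct. Note that the paper itself contains no argument for this lemma: it is introduced as a ``technical lemma which proof can be found in \cite{MR2888205}'' (Carro--Rodr\'{\i}guez-L\'opez), so there is no in-paper proof to compare against, and your self-contained argument supplies exactly what the paper outsources. The three ingredients you assemble are the right ones and fit together with no gaps: $1$-periodicity of $\psi*w$ (change of variables plus periodicity of $w$), continuity of $\psi*w$ (uniform continuity of $\psi\in\mathcal{C}_c(\R^d)$ against the local integrability of $w$, with all integrands supported in a common compact set), and strict positivity of $\psi*w$ at every single point (since $\{\psi>0\}$ has positive measure and $w>0$ a.e.). Periodicity plus continuity converts the infimum over $\R^d$ into an attained minimum over a compact fundamental domain, and the pointwise positivity makes that minimum strictly positive; this is precisely the structure any proof of the statement must have. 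Your two side remarks are also accurate: $\int_{\R^d}g=1$ in the statement is a typo for $\int_{\R^d}\psi=1$, and the support condition $\supp\psi\subset[-\frac12,\frac12]^d$ is never used in your argument---it is only relevant to how the lemma is deployed in the proof of Theorem \ref{theorem:A_p_restrict_T_general}, where the $\psi_m$ serve as an approximation of the identity on $\T^d$.
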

\begin{lem}\label{lem:approximaci�n}
 Let $w\in \mathcal{C}(\T^d)$ such that $\inf_{x\in \T^d} w(x)>0$. Consider $h\in\mathcal{C}_c^\infty (\R^d)$ satisfying $0\leq h\leq 1$
 and $\int_{\R^d} h=1$ and define $h_n(x)=n^d h(n x)$. Then,
\begin{enumerate}
    \item There exists $n_0=n_0(w)\in \N$ such that
        $
            \sup_{n\geq n_0}{||\widehat{h_n}||_{M_{p,w}(\R^d)}}\leq 2^{1/p}
        $ , for any $1\leq p<\infty$, 
        where $||\widehat{h_n}||_{M_{p,w}(\R^d)}$ stands for the norm of the convolution operator given by $h_n*f$ on $L^{p}(\R^d,w)$. 
    \item $\sup_{n}||\widehat{h_n}||_{L^\infty(\R^d)}\leq 1$.
    \item For every $\xi\in \R^d$, $\lim_n\widehat{h_n}(\xi)=1$.
\end{enumerate}

\end{lem}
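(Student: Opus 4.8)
The plan is to dispatch the three assertions separately, with (2) and (3) being elementary and (1) carrying the real content. Throughout I use the paper's convention $\widehat{h_n}(\xi)=\int_{\R^d} h_n(x)e^{2\pi i \xi x}\,\dd x$ together with the scaling identity $\widehat{h_n}(\xi)=\widehat{h}(\xi/n)$, which follows from the change of variables $y=nx$. For (2), since $h_n\geq 0$ and $\int_{\R^d} h_n=\int_{\R^d} h=1$, the trivial estimate $\abs{\widehat{h_n}(\xi)}\leq \int_{\R^d} h_n=1$ holds for every $\xi$, giving $\norm{\widehat{h_n}}_{L^\infty(\R^d)}\leq 1$ uniformly in $n$. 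For (3), the scaling identity gives $\widehat{h_n}(\xi)=\widehat{h}(\xi/n)$, and as $h\in\mathcal{C}_c^\infty(\R^d)$ its transform is continuous with $\widehat{h}(0)=\int_{\R^d}h=1$; letting $n\to\infty$ and using $\xi/n\to 0$ yields $\lim_n\widehat{h_n}(\xi)=1$ (equivalently, dominated convergence applied directly to $\int h_n e^{2\pi i\xi x}\,\dd x$).

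The heart of the matter is (1). Recalling that $\norm{\widehat{h_n}}_{M_{p,w}(\R^d)}$ is the operator norm of $f\mapsto h_n*f$ on $L^p(\R^d,w)$, the first step is a convexity reduction: since $h_n\,\dd y$ is a probability measure and $t\mapsto\abs{t}^p$ is convex for $1\leq p<\infty$, Jensen's inequality gives the pointwise bound $\abs{h_n*f(x)}^p\leq (h_n*\abs{f}^p)(x)$. Integrating against $w$ and exchanging the order of integration by Tonelli, I obtain
\[
	\norm{h_n*f}_{L^p(\R^d,w)}^p\leq \int_{\R^d}\abs{f(u)}^p\,(\check{h}_n*w)(u)\,\dd u,
\]
where $\check{h}_n(y)=h_n(-y)$. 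Consequently the desired estimate $\norm{\widehat{h_n}}_{M_{p,w}}\leq 2^{1/p}$, uniformly in $p$, follows at once from the single $p$-independent pointwise bound $\check{h}_n*w\leq 2w$ valid for all $n\geq n_0$.

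To establish this bound I exploit that $w\in\mathcal{C}(\T^d)$ is periodic and bounded below: set $m=\inf_x w(x)>0$. The function $\check{h}_n$ is a nonnegative approximate identity of total mass $1$ whose support shrinks to the origin as $n\to\infty$, since $h$ has compact support, say $\supp h\subset B(0,R)$, so $\supp\check{h}_n\subset B(0,R/n)$. Because the periodic function $w$ is uniformly continuous, the estimate $\abs{\check{h}_n*w(u)-w(u)}\leq\int\check{h}_n(y)\abs{w(u-y)-w(u)}\,\dd y$ can be forced below $m$ uniformly in $u$ by choosing $n_0$ so large that $R/n_0$ lies within the radius on which the modulus of continuity of $w$ is smaller than $m$. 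For $n\geq n_0$ this gives $\check{h}_n*w(u)\leq w(u)+m\leq 2w(u)$, using $m\leq w(u)$. Note that $n_0$ depends only on $w$ and not on $p$, exactly as stated.

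The main obstacle is precisely the uniform-in-$p$ character of (1): a direct estimate of the convolution operator would leave a $p$-dependent constant. The device that circumvents this is the Jensen reduction, which transfers the whole difficulty onto the purely multiplicative weight comparison $\check{h}_n*w\leq 2w$; once that $p$-free inequality is secured, both the constant $2^{1/p}$ and the single threshold $n_0(w)$ emerge automatically for every $1\leq p<\infty$. Everything else is routine, and the argument adapts verbatim from the bilinear exposition to the general case.
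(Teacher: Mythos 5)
Your proof is correct, and it is worth noting that the paper itself offers no argument for this lemma at all: it simply defers to the reference \cite{MR2888205}, so your write-up is a genuinely self-contained substitute rather than a variant of an in-paper proof. The three reductions you make are all sound: (2) is the trivial bound $\abs{\widehat{h_n}}\leq \int h_n=1$; (3) follows from the scaling identity $\widehat{h_n}(\xi)=\widehat{h}(\xi/n)$ and continuity of $\widehat{h}$ at $0$; and for (1) the Jensen inequality $\abs{h_n*f}^p\leq h_n*\abs{f}^p$ plus Tonelli correctly transfers the whole problem to the $p$-free pointwise comparison $\check{h}_n*w\leq 2w$, which you then get from uniform continuity of the periodic weight and the shrinking support $\supp \check{h}_n\subset B(0,R/n)$, using $\inf w=m>0$ to absorb the error: $\check{h}_n*w\leq w+m\leq 2w$ for $n\geq n_0$. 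This is exactly the mechanism that makes the constant $2^{1/p}$ and a single threshold $n_0$ work simultaneously for all $1\leq p<\infty$, which is what the main theorem's proof needs (the same $n_m$ must serve the exponents $p_1,\ldots,p_N$ at once). An essentially equivalent route, closer in spirit to what the cited reference does, is to use Minkowski's integral inequality $\norm{h_n*f}_{L^p(w)}\leq \int h_n(y)\norm{\tau_y f}_{L^p(w)}\,\dd y$ and bound each translation norm by $2^{1/p}\norm{f}_{L^p(w)}$ via the same comparison $w(\cdot+y)\leq 2w$ for $\abs{y}\leq R/n$; your Jensen version buys the same conclusion with equally little effort. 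One purely cosmetic remark: your $n_0$ depends on the support radius $R$ of $h$ as well as on $w$, but since $h$ is fixed data in the statement this matches the intended meaning of $n_0=n_0(w)$.
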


So, at this stage we have all the ingredients for proving our main theorem. 
\begin{proof}[Proof of Theorem \ref{theorem:A_p_restrict_T_general}] 

Without loss of generality we can assume that $\{\mul_j\}_{j}$ is a finite family with cardinal $J\in \N$. We are going to prove the weak case. The strong case can be  obtained with minor modifications in the argument. 

Let $\{\psi_{m}\}_m$ be a family of  nonnegative functions in
$\mathcal{C}^\infty_c(\R^d)$, supported in the cube $[-1/2,1/2]^d$ such that
is an approximation of the  identity in $L^1(\T^d)$. We can also assume that
$\lim_{m} \psi_{m}*w_l(x)=w_l(x)$ a.e. $x\in [-1/2,1/2]^d$ for $l=0,1,2$ where $w_0=w$. 

Fixed $m\in\N$, Lemma \ref{lem:regularidad_peso_debil} yields
\begin{equation}\label{eq:technic_Maximal}
	\norm{M(f_1,f_2)}_{L^{p,\infty}(\R^d,\psi_m*w)}\leq c_p \mathfrak{N}\prod_{l=1,2}\norm{f_l}_{L^{p_l}(\R^d,\psi_m*w)}.
\end{equation}
Lemma \ref{lem:acotacion_peso} yields that for any $h\in \mathcal{C}_c^\infty(\R^d)$ such that $0\leq h\leq 1$ and $\int_{\R^d} h=1$, there exists an $n_m$ such that, for
any $n\geq n_m$, the conclusions of Lemma \ref{lem:approximaci�n}
hold for the periodic weight $\psi_m*w_l$ with $l=1,2$.

Consider now
\[
\mul_{j,n}(\xi)=   \widehat{K_{j,n}}(\xi_1,\xi_2)=(\widehat{\Phi_n}*{\mul_j})(\xi)\widehat{h_n}(\xi_1)\widehat{h_n}(\xi_2),\quad {\text{for $j,n\in \N$,}}
\]
where $\Phi_n=\varphi_n\otimes \varphi_n$ and $\varphi_n$ are functions as in Definition \ref{def:normalized}. Since $\varphi_n$ and $h_n$ are compactly supported it follows that $K_{j,n}\in \mathcal{C}_c^\infty(\R^{2d})$.
We also have that
\begin{equation}\label{eq:normalised_thm}
    \lim_n \widehat{K_{j,n}}(\xi_1,\xi_2)=\mul_j(\xi_1,\xi_2)\quad {\text{ for every $\xi_1,\xi_2\in \R^d$}},
\end{equation}
as $\mul_j$ is normalized and $\widehat{h_n}\to 1$. Furthermore  
$\norm{\mul_{j,n}}_{L^\infty(\R^{2d})}\leq \norm{\mul_{j}}_{L^\infty(\R^{2d})}$ , for any $j$ as $\norm{\widehat{\varphi_n}}_{L^1(\R^d)}\leq 1$ and $\Vert{\widehat{h_n}}\Vert_{L^\infty(\R^d)}\leq 1$. With these notations we have that
\begin{equation*}
    B_{K_{j,n}}(f_1,f_2)=T_{(\widehat{\varphi_n}\otimes \widehat{\varphi_n})*{\mul_j}}(h_n*f_1,h_n*f_2)\quad  \text{for any $f_1,f_2\in \mathcal{C}_c^\infty(\R^d)$}.
\end{equation*}
with $B_{K_{j,n}}$ defined as in \eqref{eq:BK}. 
Then, \eqref{eq:technic_Maximal}, Proposition \ref{prop:good_behaviour} and Lemma \ref{lem:approximaci�n} yield 
\[
    \begin{split}
    \norm{\sup_{1\leq j\leq J} \abs{B_{K_{j,n}}(f_1,f_2)}}_{L^{p,\infty}(\R^d,\psi_m*w)} &\leq
   2^{\frac 1 p}\mathfrak{c}_{\vec{p}}\mathfrak{N}  
    \prod_{l=1,2}\norm{f_l}_{L^{p_l}(\R^d,\psi_m*w_l)}\quad \text{ for every $n\geq n_m$.}%
    \end{split}
\]
Thus, Theorem \ref{theorem:A_p_restrict_T} yields that, for any $n\geq n_m$ and any $g_1,g_2\in P(\T^d)$
\[
    \begin{split}
    \norm{\sup_{1\leq j\leq J} \abs{\mathfrak{T}_{\mul_{j,n}\vert_{\Z^{2d}}}(g_1,g_2)}}_{L^{p,\infty}(\T^d,\psi_m*w)} &\leq
    2^{\frac 1 p} \mathfrak{c}_{\vec{p}}\mathfrak{N} 
    \prod_{l=1,2}\norm{g_l}_{L^{p_r}(\T^d,\psi_m*w_l)}. %
    \end{split}
\]
Since \eqref{eq:normalised_thm} implies that
\[
    \begin{split}
    \lim_n \mathfrak{T}_{\mul_{j,n}} (g_1,g_2)(\theta)=\lim_n \sum_{k\in \Z^d} \mul_{j,n}(k_1,k_2) \widehat{g}(k_1)\widehat{g}(k_2) e^{2\pi i (k_1+k_2) \theta}=\mathfrak{T}_{\mul_j} (g_1,g_2)(\theta),
    \end{split}
\]
Fatou's lemma yields
\[
    \begin{split}
    \norm{\sup_{1\leq j\leq J}\abs{\mathfrak{T}_{\mul_j} (g_1,g_2)}}_{L^{p,\infty}(\T^d, \psi_m*w)} \hspace{-12pt}&\leq
    \liminf_n \norm{\sup_{1\leq j\leq J}\abs{\mathfrak{T}_{\mul_{j,n}} (g_1,g_2)}}_{L^{p,\infty}(\T^d,
    \psi_m*w)}\\
    \leq
    & 2^{\frac 1 p}\mathfrak{c}_{\vec{p}} \mathfrak{N}\prod_{l=1,2}\norm{g_l}_{L^{p_l}(\T^d,
    \psi_m*w_l)}.
    \end{split}
\]
We can now let $m\to \infty$ in the previous inequality to obtain \eqref{eq:weak_periodic} by recourse to the fact that 
\[
     \norm{g_l}_{L^{p_l}(\T^d, \psi_m*w_l)}\leq \norm{g_l}_{L^\infty(\T^d)}
     \norm{\psi_m*w_l-w_l}_{L^1(\T^d)}+\norm{g_l}_{L^{p_l}(\T^d,w)}
\]
and that $\lim_m \norm{g_m*w_l-w_l}_{L^1(\T^d)}=0$, for $l=1,2$.
\end{proof}

\section{Consequences and applications}\label{section:consequences}

In this section we give some applications of Theorem \ref{theorem:A_p_restrict_T_general}. We start by recalling the definition of weights belonging to the $A_p(\R^d)$ class. We refer the reader to \cites{MR807149} for other properties and generalities of these weights.

\begin{defn}  We say that a weight $w$ belongs to the class $A_p(\R^d)$, and we
write $w\in A_p(\R^d)$ if,
\[
    \sup_Q \brkt{\frac{1}{\abs{Q}}\int_Q w(x)\; \dd x}\brkt{\frac{1}{\abs{Q}}\int_Q w(x)^{1/1-p}\;
    \dd x}^{p-1}<\infty,
\]
for $1<p<\infty$, and
\[
    \sup_Q \brkt{\frac{1}{\abs{Q}}\int_Q w(x)\; \dd x}\norm{
    w^{-1}\chi_Q}_{\infty}<+\infty,
\]
 where the supremum is taken over the family of cubes
$Q$ with sides parallel to the coordinate axis. 

We denote by $A_p(\T^d)$ the family of weights belonging to $A_p(\R^d)$ such that are $1$-periodic in each variable.
\end{defn}

\subsection{Multilinear Coifman-Meyer symbols }We can apply our results to multilinear multipliers that give rise to multilinear Calder\'on-Zygmund operators. 
More precisely, as an immediate corollary of  our Theorem \ref{theorem:A_p_restrict_T_general} we obtain the following periodic counterpart of  L. Grafakos and R. Torres result \cite[Corollary 3.2 and Remark 3.6]{MR1947875} for multipliers.
\begin{cor} Let $1<p_1, \ldots, p_N<\infty$, $1/p_1+\ldots+1/p_N=1/p$ and define $p_0=\min(p_1,\ldots, p_N)$. Let $w\in A_{p_0}(\T^d)$ and let $\mul\in \mathcal{C}^\infty(\R^{Nd}\setminus \{0\})\cap \mathcal{C}(\R^{Nd})$ satisfying 
\[
	\abs{\d^{\alpha_1}_{\xi_1}\ldots \d^{\alpha_N}_{\xi_N} \mul(\xi_1,\ldots, \xi_N)}\leq C_{\alpha_1,\ldots, \alpha_N} \brkt{\abs{\xi_1}+\ldots \abs{\xi_N}}^{-\brkt{\abs{\alpha_1}+\ldots +\abs{\alpha_N}}},
\]
for any multi-indices $\alpha_1,\ldots,\alpha_N$. Let $K(x)=\widehat{\mul}(-x)$ and, for each $j\geq 0$, let $\mul_j$ be the Fourier transform of the truncated kernel $K \chi_{\set{\abs{y}>2^{-j}}}$. Define $T_j$ to be the multiplier operator associated to $\mul_j\vert_{\Z^{Nd}}$. Consider 
\[
	T_* (g_1,\ldots, g_N)(x)=\sup_{j\geq 0} \abs{T_j (g_1,\ldots, g_N)(x)},
\]
Then we have 
\[
	T_*:L^{p_1}(\T^d,w)\times \ldots \times L^{p_N}(\T^d,w) \to L^{p}(\T^d,w),
\]
and the same holds for $\mathfrak{T}_{\mul\vert_{\Z^{Nd}}}$. Moreover, if $w\in A_1(\T^d)$ then 
\[
	\mathfrak{T}_{\mul\vert_{\Z^{Nd}}}:L^{1}(\T^d,w)\times \ldots \times L^{1}(\T^d,w) \to L^{1/m,\infty}(\T^d,w).
\]
\end{cor}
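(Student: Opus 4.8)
The plan is to read off the Euclidean weighted estimates from the multilinear Calder\'on--Zygmund theory and then transfer them verbatim by Theorem \ref{theorem:A_p_restrict_T_general}, so that the statement really is an immediate corollary. First I would record that the hypotheses on $\mul$ make $K(x)=\widehat{\mul}(-x)$ an $N$-linear Calder\'on--Zygmund kernel, and that the truncations $K_j=K\chi_{\set{\abs{y}>2^{-j}}}$ give rise, on $\R^d$, to the maximal truncated operator
\[
    M(f_1,\ldots, f_N)=\sup_{j\geq 0}\abs{T_{\mul_j}(f_1,\ldots, f_N)},\qquad \mul_j=\widehat{K_j},
\]
whose $T_{\mul_j}$ coincide on test functions with the truncated singular integrals $B_{K_j}$ of \eqref{eq:BK}. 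The essential input is the weighted boundedness of this Euclidean maximal operator: since $w\in A_{p_0}(\T^d)\subset A_{p_0}(\R^d)$ and $A_{p_0}\subset A_{p_l}$ for each $l$ (because $p_0\leq p_l$), the result of Grafakos and Torres \cite{MR1947875}*{Corollary 3.2 and Remark 3.6} yields, with all weights equal to $w$,
\[
    \norm{M(f_1,\ldots, f_N)}_{L^p(\R^d,w)}\leq \mathfrak{N}\prod_{l=1}^N\norm{f_l}_{L^{p_l}(\R^d,w)},
\]
which is exactly \eqref{eq:strong}, and, when $w\in A_1(\T^d)$, the corresponding weak endpoint bound from $L^1\times\cdots\times L^1$ into $L^{1/N,\infty}(\R^d,w)$, which is \eqref{eq:weak} with $p_1=\cdots=p_N=1$ and target $L^{1/m,\infty}$ for $m=N$.

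Next I would verify that the family $\set{\mul_j}_j$ consists of normalized functions, so that the main theorem is applicable. Each $\mul_j$ is a bounded continuous function on $\R^{Nd}$ with a bound uniform in $j$ (the standard fact underlying the $L^2$ theory of truncated Calder\'on--Zygmund operators), and $\mul$ itself is continuous and bounded; by the observation following Definition \ref{def:normalized} all of these are normalized. Consequently Theorem \ref{theorem:A_p_restrict_T_general} applies to $\set{\mul_j}_j$ with $w_1=\cdots=w_N=w$, and its conclusion is precisely the asserted boundedness of the periodic maximal operator $\mathfrak{M}=T_*$: in the strong form \eqref{eq:strong_periodic} when $w\in A_{p_0}(\T^d)$, and in the weak form $L^1\times\cdots\times L^1\to L^{1/N,\infty}(\T^d,w)$ when $w\in A_1(\T^d)$.

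Finally, to pass from $T_*$ to the single operator $\mathfrak{T}_{\mul\vert_{\Z^{Nd}}}$, I would use that, for trigonometric polynomials $g_1,\ldots, g_N$, the sums defining $\mathfrak{T}_{\mul_j\vert_{\Z^{Nd}}}$ are finite and $\mul_j\to\mul$ pointwise as $j\to\infty$; hence $\mathfrak{T}_{\mul_j\vert_{\Z^{Nd}}}(g_1,\ldots, g_N)\to \mathfrak{T}_{\mul\vert_{\Z^{Nd}}}(g_1,\ldots, g_N)$ pointwise, so that $\abs{\mathfrak{T}_{\mul\vert_{\Z^{Nd}}}(g_1,\ldots, g_N)}\leq T_*(g_1,\ldots, g_N)$ and every estimate transfers. (The same bound also follows by applying Theorem \ref{theorem:A_p_restrict_T_general} directly to the one-element family $\set{\mul}$, since $\mul$ is itself normalized.) The points that genuinely require care --- and hence the main obstacle --- are entirely on the Euclidean side: confirming that \cite{MR1947875} indeed provides the weighted maximal truncated estimate for $w\in A_{p_0}$ and the $A_1$ weak endpoint with target $L^{1/N,\infty}$, together with the uniform boundedness and continuity of the truncated symbols $\mul_j$. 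Once these are in place, the transference to $\T^d$ is immediate from the main theorem.
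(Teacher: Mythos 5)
Your proposal is correct and follows exactly the route the paper intends: the paper gives no proof of this corollary beyond declaring it an immediate consequence of Theorem \ref{theorem:A_p_restrict_T_general} applied to the Euclidean weighted bounds of Grafakos--Torres (Corollary 3.2 and Remark 3.6 in \cite{MR1947875}), which is precisely your argument, including the reduction of the single-multiplier statements to the one-element family $\set{\mul}$. The only point you assert rather than justify is the continuity of the truncated symbols $\mul_j$ (needed for normalization); it does hold here, since $\mul-\mul_j$ is the Fourier transform of the compactly supported tempered distribution $K\chi_{\set{\abs{y}\leq 2^{-j}}}$, hence entire, while $\mul$ itself is continuous by hypothesis --- note this would fail if $\mul$ were allowed to be discontinuous at the origin, as the example of the truncated Hilbert transform shows.
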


\subsection{H\"ormander-Mihlin type multilinear multipliers}

We start by recalling the definition of Sobolev-type spaces. To this end, let $\psi\in \mathcal{C}^\infty_c(\R^d)$ be such that 
\[
	\supp \psi \subset \set{\xi\in \R^{Nd}: 1/2\leq \abs{\xi}\leq 2}, \quad \sum_{k\in \Z} \psi (\xi 2^{-k})=1,\quad \text{$\forall \xi\in \R^{Nd} \setminus\{0\}$}.
\]
For $\mul\in L^\infty(\R^d)$ let 
\[
	\mul_k(\xi_1,\ldots, \xi_N)=\mul(2^k \xi_1,\ldots, 2^k \xi_N)\psi(\xi_1,\ldots,\xi_N), \quad k\in \Z, \quad \xi_1,\ldots,\xi_N\in \R^d.
\]
With this notation define, for $s,s_1,\ldots, s_N\geq 0$
\[
	\|\mul_{k}\|_{H^{s}(\mathbb{R}^{Nn})}=\Big(\int_{\mathbb{R}^{Nn}} \brkt{1+\abs{\xi}^2}^s  |{\mul}_{k}(\xi)|^2\dd \xi \Big)^{1/2},
\]
and 
\[
	\|\mul_{k}\|_{H^{(s_1,\dots,
s_m)}(\mathbb{R}^{Nn})}=\Big(\int_{\mathbb{R}^{Nn}}\prod_{j=1}^N \brkt{1+\abs{\xi_j}^2}^{s_j}|{\mul}_{k}(\xi_1,\dots,\xi_N)|^2 \dd \xi_1\ldots \dd \xi_N\Big)^{1/2}.
\]

We can apply our Theorem \ref{theorem:A_p_restrict_T_general} to transfer the results of M. Fujita and N. Tomita \cite[Theorem 1.2 and Theorem 6.2]{MR2958938} %
to the periodic case. 
\begin{cor} Let $1<p_1,\ldots,p_N<\infty$,  $1/p_1+\ldots+1/p_N=1/p$, $Nd/2<s\leq N d$ and  $r=\min\{p_1,\ldots, p_N\}$. Assume that either 
\begin{enumerate}
	\item $r>Nn/s$ and $w\in A_{sr/N}(\T^d)$, or
	\item $r<(Nn/s)^\prime$, $1<p<\infty$ and $w^{1-p'}\in A_{p's/Nd}(\T^d)$.
\end{enumerate}
If $\mul\in L^\infty(\R^{Nd})$ is normalized and satisfies  $\sup_{k\in \mathbb{Z}}\|\mul_k\|_{H^{s}(\mathbb{R}^{Nd})}<\infty$, then $ \mathfrak{T}_{\mul\vert_{\Z^{Nd}}}$ is bounded from $L^{p_1}(\T^n,w)\times\ldots \times L^{p_N}(\T^d,w)$ to $L^{p}(\T^d,w)$.
\end{cor}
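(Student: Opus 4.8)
The plan is to deduce this corollary directly from Theorem~\ref{theorem:A_p_restrict_T_general} by transferring a known weighted boundedness result on $\R^{d}$ to the periodic setting. Since we are dealing with a single multiplier rather than a genuine family, I would take $\mathcal{F}$ to consist of one index, so that the maximal operator $M$ reduces to $\abs{T_{\mul}}$ and $\mathfrak{M}$ to $\abs{\mathfrak{T}_{\mul\vert_{\Z^{Nd}}}}$, and I would choose the common weights $w_1=\cdots=w_N=w$.

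First I would establish the continuous counterpart on $\R^{d}$. A weight $w\in A_q(\T^d)$ is, by definition, a $1$-periodic function lying in $A_q(\R^d)$; hence the index and weight hypotheses of cases (1) and (2) are exactly the hypotheses of the two weighted theorems of Fujita and Tomita \cite[Theorem 1.2 and Theorem 6.2]{MR2958938}. Applying those results to the symbol $\mul$, which satisfies $\sup_{k}\|\mul_k\|_{H^{s}(\R^{Nd})}<\infty$, yields the strong-type estimate
\[
	\norm{T_{\mul}(f_1,\ldots,f_N)}_{L^{p}(\R^d,w)}\leq \mathfrak{N}\prod_{l=1}^N \norm{f_l}_{L^{p_l}(\R^d,w)},
\]
which is precisely hypothesis \eqref{eq:strong} of Theorem~\ref{theorem:A_p_restrict_T_general} in the single-multiplier case.

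Second, since $\mul$ is assumed normalized, the point evaluation $\mul\vert_{\Z^{Nd}}$ is well defined (as noted in the observation following Definition~\ref{def:normalized}), and part (1) of Theorem~\ref{theorem:A_p_restrict_T_general} applies verbatim. This transfers the estimate above to the torus and produces
\[
	\norm{\mathfrak{T}_{\mul\vert_{\Z^{Nd}}}(g_1,\ldots,g_N)}_{L^{p}(\T^d,w)}\leq \mathfrak{c}_{\vec p}\,\mathfrak{N}\prod_{l=1}^N\norm{g_l}_{L^{p_l}(\T^d,w)},
\]
which is exactly the claimed boundedness of $\mathfrak{T}_{\mul\vert_{\Z^{Nd}}}$ from $L^{p_1}(\T^d,w)\times\cdots\times L^{p_N}(\T^d,w)$ into $L^{p}(\T^d,w)$.

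The only genuine verification, and the main (though modest) obstacle, is to check that the index and weight conditions in (1) and (2) coincide with those required by the two Fujita--Tomita theorems, in particular the role of $r=\min\{p_1,\ldots,p_N\}$ and, in the second regime, the dual condition $w^{1-p'}\in A_{p's/Nd}(\T^d)$, and to confirm that the normalization hypothesis on $\mul$ is precisely what is needed to pass to the restriction $\mul\vert_{\Z^{Nd}}$ and invoke the transference theorem. Everything else reduces to a direct substitution into Theorem~\ref{theorem:A_p_restrict_T_general}.
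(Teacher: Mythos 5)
Your proposal is correct and follows exactly the route the paper intends: the corollary is stated as an immediate application of Theorem~\ref{theorem:A_p_restrict_T_general} (in the single-multiplier case $\mathcal{F}=\{1\}$, with $w_1=\cdots=w_N=w$), where the hypotheses (1) and (2) are precisely those of the two Fujita--Tomita theorems on $\R^d$, whose conclusions supply estimate \eqref{eq:strong}, and the normalization of $\mul$ makes $\mul\vert_{\Z^{Nd}}$ meaningful so that part (1) of the theorem yields the periodic bound. No further argument is given or needed in the paper, so your reduction is essentially identical to its proof.
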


\begin{cor} Let $1<p_1,\ldots,p_N<\infty$,  $1/p_1+\ldots+1/p_N=1/p$ and $n/2<s_j\leq n$ for $1\leq j\leq N$. Assume that 
\[
	p_j>d/s_j \quad \text{and}\quad w_j\in A_{p_j s_j/d}(\T^d)\quad \text{for}\quad 1\leq j\leq N.
\]
If $\mul\in L^\infty(\R^{Nd})$ is normalized and satisfies 
$
	\sup_{k\in \mathbb{Z}}\|\mul_k\|_{H^{(s_1,\ldots s_N)}(\mathbb{R}^{Nd})}<\infty,
$
then $ \mathfrak{T}_{\mul\vert_{\Z^{Nd}}}$  is bounded from $L^{p_1}(\T^d,w_1)\times\ldots \times L^{p_N}(\T^d,w_N)$ to $L^{p}(\T^d,w)$ with $w=\prod_{j=1}^N w_j^{p/p_j}$.
\end{cor}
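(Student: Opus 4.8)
The plan is to obtain this corollary as a direct application of the strong-type transference in part~(1) of Theorem~\ref{theorem:A_p_restrict_T_general}, taking the index set $\mathcal{F}=\{1\}$ to consist of the single symbol $\mul$. With this choice the maximal operator $M$ of \eqref{eq:maximal} reduces to $\abs{T_\mul}$ and $\mathfrak{M}$ of \eqref{eq:maximal_periodic} reduces to $\abs{\mathfrak{T}_{\mul\vert_{\Z^{Nd}}}}$, so that \eqref{eq:strong_periodic} is exactly the claimed boundedness of $\mathfrak{T}_{\mul\vert_{\Z^{Nd}}}$. Since $\mul$ is normalized, the point-wise restriction $\mul\vert_{\Z^{Nd}}$ makes sense (as recorded after Definition~\ref{def:normalized}), and all that remains to feed into the theorem is the corresponding Euclidean estimate \eqref{eq:strong}.

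First I would record that \eqref{eq:strong} holds for $T_\mul$ under the stated hypotheses: this is precisely the weighted multilinear H\"ormander-Mihlin theorem of Fujita and Tomita \cite[Theorem~6.2]{MR2958938}. Their result gives, from the product Sobolev bound $\sup_k\|\mul_k\|_{H^{(s_1,\ldots,s_N)}(\R^{Nd})}<\infty$ together with $p_j>d/s_j$ and $w_j\in A_{p_js_j/d}(\R^d)$, the inequality
\[
    \norm{T_\mul(f_1,\ldots,f_N)}_{L^p(\R^d,w)}\leq \mathfrak{N}\prod_{l=1}^N \norm{f_l}_{L^{p_l}(\R^d,w_l)},\qquad w=\prod_{j=1}^N w_j^{p/p_j},
\]
which is \eqref{eq:strong} for the singleton family.

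Next I would check that the weights are admissible in the sense required by Theorem~\ref{theorem:A_p_restrict_T_general}. By definition, $w_j\in A_{p_js_j/d}(\T^d)$ means that $w_j$ is $1$-periodic and belongs to $A_{p_js_j/d}(\R^d)$, so each $w_j$ is an admissible $1$-periodic weight. The target weight $w=\prod_j w_j^{p/p_j}$ is again $1$-periodic; moreover, since $\sum_j p/p_j=1$, H\"older's inequality on each cube shows that $w$ is locally integrable and positive a.e., hence a genuine $1$-periodic weight. Part~(1) of Theorem~\ref{theorem:A_p_restrict_T_general} then yields \eqref{eq:strong_periodic}, which is the assertion, with norm at most $\mathfrak{c}_{\vec{p}}\mathfrak{N}$.

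There is no substantial analytic obstacle here: the multilinear singular-integral content is entirely carried by \cite[Theorem~6.2]{MR2958938}, and the passage to the torus is automatic from the transference machinery. The only point demanding a little care is the bookkeeping of the second step---verifying that the product weight $w$ is admissible and that the periodic weights $(w,w_1,\ldots,w_N)$ entering \eqref{eq:strong} and \eqref{eq:strong_periodic} coincide with the ones supplied---after which the conclusion is immediate.
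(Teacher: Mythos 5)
Your proof is correct and is exactly the paper's intended argument: the corollary is stated as an immediate application of Theorem~\ref{theorem:A_p_restrict_T_general}(1) with $\mathcal{F}$ a singleton, taking the Euclidean weighted estimate of Fujita--Tomita \cite[Theorem~6.2]{MR2958938} as the hypothesis \eqref{eq:strong} and reading off \eqref{eq:strong_periodic} as the conclusion. Your extra bookkeeping (that $w=\prod_{j}w_j^{p/p_j}$ is a genuine $1$-periodic weight, via H\"older and $\sum_j p/p_j=1$) is a detail the paper leaves implicit, and it is handled correctly.
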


\subsection{Multipliers in  H\"ormander multilinear class $S^{m}_{\rho,0}$} 

We can obtain the following periodic counterparts of N. Michalowski, D. Rule and W. Staubach \cite[Theorem 3.3]{MRS} for multipliers in a H\"ormander class $S^m_{\rho,0}(\R^{Nd})$ ( that is multipliers satisfying  \eqref{ks1} below).

\begin{thm} \label{multi_one}
Fix $p_j \in [1,2]$ for $j=1,\dots,N$ and let $\mul\in\mathcal{C}^\infty(\R^{Nd} )$ satisfying
\begin{equation} \label{ks1}
|\partial_{\xi_1}^{\alpha_1}\ldots \partial_{\xi_N}^{\alpha_N} \mul(\xi_1,\ldots,\xi_N)| \leq C_{\alpha_1,\ldots,\alpha_N} \left(1+\abs{\xi_1}+\ldots+\abs{\xi_N}\right)^{m-\rho (|
\alpha_1|+\ldots+\abs{\alpha_N})},
\end{equation}
for any multi-indices $\alpha_1,\ldots,\alpha_N$,  with $0 \leq \rho \leq 1$ and $m < (\rho - 1)\sum_{j=1}^N\frac{n}{p_j}$. 

Then  for $p_j < q_j <\infty$ and $r > 0$ such that $\frac{1}{r} = \sum_{j=1}^N \frac{1}{q_j}$, $\mathfrak{T}_{\mul\vert_{\Z^{Nd}}}$ is a bounded operator from $L^{q_1}\brkt{\T^d,w_1} \times \dots \times L^{q_N}\brkt{\T^d,w_N}$ to $L^{r}\brkt{\T^d, w}$ whenever $w_j \in A_{q_j/p_j}(\T^d)$ if $q_j < \infty$ for $j=1,\dots,N$, and $w = \prod_{j=1}^N w_j^{r/q_j}$.
\end{thm}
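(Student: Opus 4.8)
The plan is to obtain this as a direct application of the main transference result, Theorem \ref{theorem:A_p_restrict_T_general}, to the single multiplier $\mul$, that is, taking the index set $\mathcal{F}$ to be a singleton so that $M=\abs{T_\mul}$ and $\mathfrak{M}=\abs{\mathfrak{T}_{\mul\vert_{\Z^{Nd}}}}$. The Euclidean input I would invoke is the weighted bound of Michalowski, Rule and Staubach \cite[Theorem 3.3]{MRS}: under the symbol hypothesis \eqref{ks1} with $m<(\rho-1)\sum_{j}n/p_j$, the operator $T_\mul$ maps $L^{q_1}(\R^d,w_1)\times\cdots\times L^{q_N}(\R^d,w_N)$ boundedly into $L^{r}(\R^d,w)$ whenever $p_j<q_j<\infty$, $1/r=\sum_j 1/q_j$, $w_j\in A_{q_j/p_j}(\R^d)$ and $w=\prod_j w_j^{r/q_j}$.

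Next I would verify that the hypotheses of Theorem \ref{theorem:A_p_restrict_T_general}(1) are satisfied. Since $\rho\leq 1$ and $n/p_j>0$ for every $j$, we have $(\rho-1)\sum_j n/p_j\leq 0$, hence $m<0$; taking all $\alpha_j=0$ in \eqref{ks1} then gives $\abs{\mul(\xi)}\leq C(1+\abs{\xi_1}+\cdots+\abs{\xi_N})^m\leq C$, so $\mul\in L^\infty(\R^{Nd})$. Being moreover $\mathcal{C}^\infty$, it is continuous and bounded, and therefore normalized by the remark following Definition \ref{def:normalized}, so that $\mul\vert_{\Z^{Nd}}$ is well defined. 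The exponents meet the standing conditions of the theorem, as $1\leq p_j\leq q_j$ forces $q_j\geq 1$ while $1/q_1+\cdots+1/q_N=1/r$ with $0<r<\infty$ reproduces \eqref{eq:relacion}. Finally, the weights $w_j\in A_{q_j/p_j}(\T^d)$ are, by definition of the periodic class, $1$-periodic and contained in $A_{q_j/p_j}(\R^d)$, whence $w=\prod_j w_j^{r/q_j}$ is also $1$-periodic; thus the MRS bound above holds with these weights and furnishes \eqref{eq:strong} for the singleton maximal operator, with $\mathfrak{N}$ equal to the MRS norm.

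With these checks in place, Theorem \ref{theorem:A_p_restrict_T_general}(1) transfers \eqref{eq:strong} to \eqref{eq:strong_periodic} and yields
\[
\norm{\mathfrak{T}_{\mul\vert_{\Z^{Nd}}}(g_1,\ldots,g_N)}_{L^{r}(\T^d,w)}\leq \mathfrak{c}_{\vec{p}}\,\mathfrak{N}\prod_{j=1}^{N}\norm{g_j}_{L^{q_j}(\T^d,w_j)},
\]
where $\mathfrak{c}_{\vec{p}}$ is the constant of that theorem for the tuple $(r,q_1,\ldots,q_N)$; since $\mathfrak{M}$ reduces to $\abs{\mathfrak{T}_{\mul\vert_{\Z^{Nd}}}}$ for a one-element family, this is precisely the claimed boundedness.

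The argument is essentially bookkeeping: the substantive points are confirming that $\mul$ is normalized and that the periodic Muckenhoupt weights $A_{q_j/p_j}(\T^d)$ indeed sit inside the Euclidean classes $A_{q_j/p_j}(\R^d)$ demanded by \cite{MRS}. The one feature that would otherwise be delicate is that $r$ may be strictly less than $1$, so $L^{r}$ is not normable; but this lack of convexity is exactly what Theorem \ref{theorem:A_p_restrict_T_general} is designed to absorb, so no separate treatment of that regime is required.
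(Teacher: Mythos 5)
Your proposal is correct and is essentially the paper's own (implicit) argument: the theorem is stated in \S\ref{section:consequences} precisely as a direct transfer of the Euclidean weighted bound of Michalowski--Rule--Staubach via Theorem \ref{theorem:A_p_restrict_T_general} applied to a singleton family $\mathcal{F}$, exactly as you do. Your verification that $m<0$ makes $\mul$ bounded (hence, being smooth, continuous and normalized), and that the exponents $(r,q_1,\ldots,q_N)$ and the $1$-periodic $A_{q_j/p_j}$ weights fit the standing hypotheses, covers the only points the paper leaves to the reader.
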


It is well know that any amplitude in the multilinear H\"ormander class $S^0_{1,0}$ determines a multilinear Calder\'on-Zygmund operator (see \cite{MR1880324}). Then we can transfer the results in \cite{MR2483720}*{Corollary 3.9} to the periodic case for obtaining the following result for multipliers in that class and weights satisfying \eqref{eq:ap_vec} below. These weights are said to belong to the so called $A_{\vec{p}}$ class (see \cite{MR2483720}*{Theorem 3.6}).

\begin{cor} Let $1\leq p_1,\ldots, p_N<\infty$,  $1/p_1+\ldots+1/p_2=1/p$ and $\mul\in \mathcal{C}^\infty(\R^{Nd})$ satisfying   \eqref{ks1} with $m=0$ and $\rho=1$.

Let $w_1,\ldots, w_N$ be $1$-periodic weights satisfying 
\begin{equation}\label{eq:ap_vec}
	w_j^{1-p_j'}\in A_{Np_j}(\T^d) \quad j=1,\ldots, N,\qquad v_{\vec{w}}\in A_{Np}(\T^d),
\end{equation}
where $v_{\vec{w}}=\prod_{j=1}^N w_{j}^{p/p_j}$ and, when $p_j=1$,
the condition $w_j^{1-p_j'}\in A_{Np_j}$ is understood as $w_j^{1/N}\in A_1(\T^d)$.

\begin{enumerate}
	\item If $1<p_j<\infty$, $j=1\ldots, N$, then 
	\[
		\norm{\mathfrak{T}_{\mul\vert_{\Z^{Nd}}}(g_1\ldots,g_N)}_{L^p(\T^d,v_{\vec{w}})} \leq C \prod_{j=1}^N \norm{g_j}_{L^{p_j}(\T^d,w_j)}.
	\]
	\item If $1\leq p_j<\infty$, $j=1\ldots, N$, and at least one of the $p_j=1$, then 
	\[
		\norm{\mathfrak{T}_{\mul\vert_{\Z^{Nd}}}(g_1\ldots,g_N)}_{L^{p,\infty}(\T^d,v_{\vec{w}})} \leq C \prod_{j=1}^N \norm{g_j}_{L^{p_j}(\T^d,w_j)}.
	\]			
\end{enumerate}
\end{cor}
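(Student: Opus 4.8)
The plan is to deduce this Corollary directly from the main transference result, Theorem \ref{theorem:A_p_restrict_T_general}, applied to the single multiplier $\mul$ (so $\mathcal{F}$ consists of one index), combined with the known Euclidean weighted bounds for multilinear Calder\'on--Zygmund operators from \cite{MR2483720}. The structure of the argument mirrors the preceding corollaries in this section: one takes a multiplier whose periodic version we wish to control, verifies that it is normalized, invokes a Euclidean weighted estimate as the hypothesis of Theorem \ref{theorem:A_p_restrict_T_general}, and reads off the periodic conclusion.

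First I would record that an amplitude $\mul\in\mathcal{C}^\infty(\R^{Nd})$ satisfying \eqref{ks1} with $m=0$ and $\rho=1$ is, by the cited fact from \cite{MR1880324}, the symbol of an $N$-linear Calder\'on--Zygmund operator $T_\mul$. In particular $\mul$ is smooth and bounded, hence continuous and bounded on $\R^{Nd}$, so by the Observation following Definition \ref{def:normalized} it is normalized; thus $\mul\vert_{\Z^{Nd}}$ is well defined and the hypotheses of Theorem \ref{theorem:A_p_restrict_T_general} on normalized symbols are met. Next I would quote \cite{MR2483720}*{Corollary 3.9}: for weights $\vec{w}=(w_1,\ldots,w_N)$ lying in the class $A_{\vec p}$ described by \eqref{eq:ap_vec}, the operator $T_\mul$ maps $L^{p_1}(\R^d,w_1)\times\cdots\times L^{p_N}(\R^d,w_N)$ into $L^{p}(\R^d,v_{\vec w})$ when all $p_j>1$, and into $L^{p,\infty}(\R^d,v_{\vec w})$ when some $p_j=1$, where $v_{\vec w}=\prod_{j=1}^N w_j^{p/p_j}$. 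These are precisely the Euclidean strong-type bound \eqref{eq:strong} and weak-type bound \eqref{eq:weak} (with the target weight $w=v_{\vec w}$ and $\mathcal{F}$ a singleton, so that the maximal operator $M$ equals $\abs{T_\mul}$). One must of course check that the $A_{\vec p}$ membership of $\vec{w}$ is exactly what \eqref{eq:ap_vec} encodes, including the limiting reading of the condition $w_j^{1-p_j'}\in A_{Np_j}$ as $w_j^{1/N}\in A_1$ when $p_j=1$; this matches the formulation of the $A_{\vec p}$ class in \cite{MR2483720}*{Theorem 3.6}.

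With the Euclidean hypothesis in hand, I would apply part (1) of Theorem \ref{theorem:A_p_restrict_T_general} in the case $1<p_j<\infty$ to transfer \eqref{eq:strong} to the periodic strong-type estimate \eqref{eq:strong_periodic} for $\mathfrak{T}_{\mul\vert_{\Z^{Nd}}}=\mathfrak{M}$, yielding the bound into $L^p(\T^d,v_{\vec w})$; and part (2) in the case where at least one $p_j=1$ to transfer \eqref{eq:weak} to \eqref{eq:weak_periodic}, giving the weak-type bound into $L^{p,\infty}(\T^d,v_{\vec w})$. Here I use that the target weight in the theorem is a general $1$-periodic weight $w$, which we simply specialize to $w=v_{\vec w}$; since each $w_j$ is assumed $1$-periodic, so is $v_{\vec w}$, and the theorem's periodicity hypotheses hold. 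The constant $\mathfrak{c}_{\vec p}$ from the theorem is absorbed into the unspecified constant $C$.

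The main obstacle, and the one step I would check most carefully, is the translation between the weight hypothesis \eqref{eq:ap_vec} as written here and the exact $A_{\vec p}$ condition under which \cite{MR2483720}*{Corollary 3.9} is stated; in particular one must confirm that $v_{\vec w}\in A_{Np}$ together with $w_j^{1-p_j'}\in A_{Np_j}$ for each $j$ is equivalent to (or at least sufficient for) the $A_{\vec p}$ membership used in the Euclidean CZ theory, and that the $p_j=1$ endpoint is handled consistently. Once that identification is secured, the transference is automatic and everything else is bookkeeping. A minor secondary point is simply to note, as in the earlier corollaries, that normalization of $\mul$ is what legitimizes passing from the continuous symbol to its restriction $\mul\vert_{\Z^{Nd}}$ defining $\mathfrak{T}_{\mul\vert_{\Z^{Nd}}}$.
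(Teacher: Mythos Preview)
Your proposal is correct and matches the paper's approach exactly. The paper does not give a formal proof of this corollary; it simply notes in the paragraph preceding the statement that $S^0_{1,0}$ amplitudes yield multilinear Calder\'on--Zygmund operators (via \cite{MR1880324}), that the Euclidean weighted bounds come from \cite{MR2483720}*{Corollary 3.9}, that the condition \eqref{eq:ap_vec} is precisely the $A_{\vec p}$ characterization from \cite{MR2483720}*{Theorem 3.6}, and that one then transfers via Theorem \ref{theorem:A_p_restrict_T_general}---which is exactly the argument you outline.
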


\begin{bibdiv}
\begin{biblist}
\bib{Andersen_Mohanty}{article}{
      author={Andersen, K.},
      author={Mohanty, P.},
       title={Restriction and extension of Fourier multipliers between weighted $L^p$ spaces on $\R^n$ and $\T^n$},
        date={2009},
        ISSN={0002-9939},
     journal={Proc. Amer. Math. Soc.},
      volume={137},
      number={5},
       pages={1689\ndash 1697},
}

\bib{MR2256933}{article}{
   author={Berkson, E.},
   author={Blasco, O.},
   author={Carro, M.},
   author={Gillespie, T. A.},
   title={Discretization and transference of bisublinear maximal operators},
   journal={J. Fourier Anal. Appl.},
   volume={12},
   date={2006},
   number={4},
   pages={447--481},
   issn={1069-5869},
}

\bib{MR2001939}{article}{
      author={Berkson, E.},
      author={Gillespie, T.~A.},
       title={On restrictions of multipliers in weighted settings},
        date={2003},
        ISSN={0022-2518},
     journal={Indiana Univ. Math. J.},
      volume={52},
      number={4},
       pages={927\ndash 961},
}

\bib{MR2169476}{article}{
   author={Blasco, O.},
   author={Carro, M.},
   author={Gillespie, T. A.},
   title={Bilinear Hilbert transform on measure spaces},
   journal={J. Fourier Anal. Appl.},
   volume={11},
   date={2005},
   number={4},
   pages={459--470},
   issn={1069-5869},
}
\bib{MR2037006}{article}{
   author={Blasco, O.},
   author={Villarroya, F.},
   title={Transference of bilinear multiplier operators on Lorentz spaces},
   journal={Illinois J. Math.},
   volume={47},
   date={2003},
   number={4},
   pages={1327--1343},
   issn={0019-2082},
}

\bib{MR2406690}{article}{
      author={Carro, M.~J.},
      author={Rodr{\'{\i}}guez-L{\'o}pez, S.},
       title={Transference results on weighted {L}ebesgue spaces},
        date={2008},
        ISSN={0308-2105},
     journal={Proc. Roy. Soc. Edinburgh Sect. A},
      volume={138},
      number={2},
       pages={239\ndash 263},
}

\bib{MR2888205}{article}{
   author={Carro, M.~J.},
   author={Rodr{\'{\i}}guez-L{\'o}pez, S.},
   title={On restriction of maximal multipliers in weighted settings},
   journal={Trans. Amer. Math. Soc.},
   volume={364},
   date={2012},
   number={5},
   pages={2241--2260},
   issn={0002-9947},
}
\bib{MR0380244}{article}{
   author={Coifman, R. R.},
   author={Meyer, Yves},
   title={On commutators of singular integrals and bilinear singular
   integrals},
   journal={Trans. Amer. Math. Soc.},
   volume={212},
   date={1975},
   pages={315--331},
   issn={0002-9947},
}
\bib{MR0174937}{article}{
      author={de~Leeuw, K.},
       title={On {$L\sb{p}$} multipliers},
        date={1965},
        ISSN={0003-486X},
     journal={Ann. of Math. (2)},
      volume={81},
       pages={364\ndash 379},
}

\bib{MR1808390}{article}{
      author={Fan, D.},
      author={Sato, S.},
       title={Transference on certain multilinear multiplier operators},
        date={2001},
        ISSN={1446-7887},
     journal={J. Aust. Math. Soc.},
      volume={70},
      number={1},
       pages={37\ndash 55},
}

\bib{MR2958938}{article}{
    AUTHOR = {Fujita, M}, 
    author={Tomita, N.},
     TITLE = {Weighted norm inequalities for multilinear {F}ourier
              multipliers},
   JOURNAL = {Trans. Amer. Math. Soc.},
  FJOURNAL = {Transactions of the American Mathematical Society},
    VOLUME = {364},
      YEAR = {2012},
    NUMBER = {12},
     PAGES = {6335--6353},
      ISSN = {0002-9947},
     CODEN = {TAMTAM},
   MRCLASS = {Preliminary Data},
}
\bib{MR807149}{book}{
      author={Garc{\'{\i}}a-Cuerva, J.},
      author={Rubio~de Francia, J.~L.},
       title={Weighted norm inequalities and related topics},
      series={North-Holland Mathematics Studies},
   publisher={North-Holland Publishing Co.},
     address={Amsterdam},
        date={1985},
      volume={116},
        ISBN={0-444-87804-1},
}

\bib{MR2212316}{article}{
   author={Grafakos, L.},
   author={Honz{\'{\i}}k, P.},
   title={Maximal transference and summability of multilinear Fourier
   series},
   journal={J. Aust. Math. Soc.},
   volume={80},
   date={2006},
   number={1},
   pages={65--80},
   issn={1446-7887},
}
\bib{MR2030573}{article}{
   author={Grafakos, L.},
   author={Martell, J. M.~},
   title={Extrapolation of weighted norm inequalities for multivariable
   operators and applications},
   journal={J. Geom. Anal.},
   volume={14},
   date={2004},
   number={1},
   pages={19--46},
   issn={1050-6926},
}
\bib{MR1880324}{article}{
   author={Grafakos, L.},
   author={Torres, R.},
   title={Multilinear Calder\'on-Zygmund theory},
   journal={Adv. Math.},
   volume={165},
   date={2002},
   number={1},
   pages={124--164},
   issn={0001-8708},
}

\bib{MR1947875}{article}{
   author={Grafakos, L.},
   author={Torres, R.},
   title={Maximal operator and weighted norm inequalities for multilinear
   singular integrals},
   journal={Indiana Univ. Math. J.},
   volume={51},
   date={2002},
   number={5},
   pages={1261--1276},
   issn={0022-2518},
}
\bib{MR2948874}{article}{
   author={Grafakos, L.},
   author={Si, Z.},
   title={The H\"ormander multiplier theorem for multilinear operators},
   journal={J. Reine Angew. Math.},
   volume={668},
   date={2012},
   pages={133--147},
   issn={0075-4102},
}

\bib{MR583403}{article}{
      author={Kenig, C.~E.},
      author={Tomas, P.~A.},
       title={Maximal operators defined by {F}ourier multipliers},
        date={1980},
        ISSN={0039-3223},
     journal={Studia Math.},
      volume={68},
      number={1},
       pages={79\ndash 83},
}

\bib{MR1689336}{article}{
      author={Lacey, M.},
      author={Thiele, C.},
       title={On {C}alder\'on's conjecture},
        date={1999},
        ISSN={0003-486X},
     journal={Ann. of Math. (2)},
      volume={149},
      number={2},
       pages={475\ndash 496},
}
\bib{MR2483720}{article}{
   author={Lerner, A.},
   author={Ombrosi, S.},
   author={P{\'e}rez, C.},
   author={Torres, R.},
   author={Trujillo-Gonz{\'a}lez, R.},
   title={New maximal functions and multiple weights for the multilinear
   Calder{\'o}n-Zygmund theory},
   journal={Adv. Math.},
   volume={220},
   date={2009},
   number={4},
   pages={1222--1264},
   issn={0001-8708},
}
\bib{MRS}{article}{
   author={Michalowski, N.},
   author={Rule, D.},
   author={Staubach, W.},
   title={Multilinear pseudodifferential operators beyond Calder{\'o}n-Zygmund theory},
   journal={Preprint, \href{http://arxiv.org/abs/1206.4712}{http://arxiv.org/abs/1206.4712}}
}

\bib{MR0290095}{book}{
      author={Stein, E. M.},
       title={Singular integrals and differentiability properties of
  functions},
      series={Princeton Mathematical Series, No. 30},
   publisher={Princeton University Press},
     address={Princeton, N.J.},
        date={1970},
}
\bib{MR2671120}{article}{
   author={Tomita, N.},
   title={A H\"ormander type multiplier theorem for multilinear operators},
   journal={J. Funct. Anal.},
   volume={259},
   date={2010},
   number={8},
   pages={2028--2044},
   issn={0022-1236},
}

\bib{MR2471164}{article}{
   author={Villarroya, Francisco},
   title={Bilinear multipliers on Lorentz spaces},
   journal={Czechoslovak Math. J.},
   volume={58(133)},
   date={2008},
   number={4},
   pages={1045--1057},
   issn={0011-4642},
}
\end{biblist}
\end{bibdiv}
\end{document}